\theoremstyle{plain}
\newtheorem{theorem}{Theorem}[section]
\newtheorem{corollary}[theorem]{Corollary}
\newtheorem{lemma}[theorem]{Lemma}
\newtheorem{proposition}[theorem]{Proposition}
\theoremstyle{definition}
\newtheorem{rem}[theorem]{Remark}
\newtheorem{con}[theorem]{Conjecture}
\newtheorem{que}[theorem]{Question}
\DeclareMathOperator{\Km}{Km}
\DeclareMathOperator{\Span}{Span}
\def\ps@pprintTitle{%
  \let\@oddhead\@empty
  \let\@evenhead\@empty
  \let\@oddfoot\@empty
  \let\@evenfoot\@oddfoot
}
\newcommand{\G}{\mathcal{G}}
\title{The Gauss map and secants of the Kummer variety}
\thanks{The first author was partially supported by Fondecyt Grant 3150171 and CONICYT PIA ACT1415}
\thanks{The second author is supported by prize funds related to the PRIN-project 2015EYPTSB ``Geometry of Algebraic Varieties" and by University Roma Tre and the FIRB 2012 ``Moduli spaces and their applications" .}
\thanks{The third author was partially supported by Progetto di Ateneo  moduli, deformazioni e superfici K3 and  PRIN 2015  : Moduli spaces and Lie theory}
\author{Robert Auffarth}
\address{Departamento de Matem\'aticas, Facultad de
Ciencias, Universidad de Chile, Santiago\\Chile}
\email{rfauffar@uchile.cl}
\author{Giulio Codogni}
\address{Dipartimento di Matematica e Fisica, Universit\`a Roma Tre, Italy}
\email{codogni@mat.uniroma3.it}
\author{Riccardo Salvati Manni}
\address{Dipartimento di Matematica ``Guido Castelnuovo", Universit\`a di Roma ``La Sapienza", Italy}
\email{salvati@mat.uniroma1.it}
\begin{document}

\maketitle

\begin{abstract}
Fay's trisecant formula shows that the Kummer variety of the Jacobian of a smooth projective curve has a four dimensional family of trisecant lines. We study when these lines intersect the theta divisor of the Jacobian, and prove that the Gauss map of the theta divisor is constant on these points of intersection, when defined. We investigate the relation between the Gauss map and multisecant planes of the Kummer variety as well.
\end{abstract}

\begin{section}{Introduction}

Let $(A,\Theta)$ be a complex indecomposable principally polarised abelian variety of dimension $g$. The line bundle $2\Theta$ is canonically defined, and gives a finite morphism
$$
\Km\colon A\to \mathbb{P}^{2^g-1}
$$
whose image is the Kummer variety $K(A)$ of $A$.

When $A$ is the Jacobian $J(C)$ of a curve $C$, the Kummer variety has a four dimensional family of trisecants, and this is indeed a characterization of Jacobians, see Sections \ref{branch}, \ref{ram} and references therein.

In this paper, we fix a symmetric theta divisor representing the polarization, which by abuse of notation we still denote by $\Theta$, and we look for lines that intersect $\Km(\Theta)$ in at least three points; we will call these lines \textit{theta trisecants} or trisecants of the theta divisor. Let us remark that any two symmetric theta divisors are related by a translation of order two, and these translations are the only ones induced by projective automorphisms of $\mathbb{P}^{2^g-1}$, so the study of trisecants of a symmetric theta divisor is independent on the choice of the divisor. We do not study trisecants of non-symmetric theta divisors.

The main novelty of this work is to relate these trisecants to the Gauss map. This is used to show that these trisecants exist, and to completely classify them.

Recall that the Gauss map is a dominant rational morphism
$$
\G\colon \Theta \dashrightarrow \mathbb{P}T_0A^*\cong \mathbb{P}^{g-1}
$$
whose domain is the smooth locus $\Theta^{sm}$. A basic reference is \cite[Section 4.4]{BL}, some recent research papers on the Gauss map and their generalizations are \cite{Giulio}, \cite{Riccardo}, \cite{Kramer} and \cite{Pareschi}.

In the case of the Jacobian of a smooth projective genus $g$ curve $C$, the Gauss map has an explicit geometric interpretation. Indeed, $\mathbb{P}T_0A^*$ is canonically isomorphic to the canonical linear system $\mathbb{P}H^0(C,K)$, the divisor $\Theta$ can be seen as the locus of effective divisors $W_{g-1}$ in $\mbox{Pic}^{g-1}(C)$, and if $D\in W_{g-1}$ is a smooth point, $\G(D)$ is just the linear span of $\varphi(D)$ in $\mathbb{P}^{g-1}$, where $\varphi:C\to\mathbb{P}^{g-1}$ is the canonical map. Note that each smooth point of $\Theta$ thus defines a canonical divisor $\varphi^{-1}(\mathcal{G}(D))$. 

Our main result is the following.

\begin{theorem}[= Theorems \ref{thm:exist} and \ref{thm:proof}]\label{thm:intro}
Let $A$ be the Jacobian of a smooth projective curve, and let $x,y\in\Theta^{sm}$. If the images of $x$ and $y$ in the Kummer variety lie on a trisecant line, then $\G(x)=\G(y)$, $\G(x)$ lies in a  two dimensional subvariety $\mathcal{B}^*_3$ of the branch locus of $\G$ defined in Section \ref{branch}, and $x$ and $y$ are among the points of highest multiplicity on the fiber of $\G$. 

Conversely, for a generic $K_0\in\mathcal{B}^*_3$, there exists a trisecant of the theta divisor whose points of intersection with $\Theta^{sm}$ lie in the fiber of $\G$ over $K_0$, and the points of highest multiplicity of $\G$ over $K_0$ all lie on trisecants.
\end{theorem}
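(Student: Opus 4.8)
The key tool is Fay's trisecant formula, which I would state explicitly: for a curve $C$ with Jacobian $A = J(C)$, and for points $p, q, r, s \in C$, the three points $\Km(e + p - r)$, $\Km(e + q - r)$, $\Km(e + p - s)$ (suitably normalized, with $e$ chosen so that the relevant translates of $\Theta$ are symmetric) are collinear in $\mathbb{P}^{2^g-1}$. So the four-dimensional family of trisecants is parametrized (up to the diagonal) by $4$-tuples of points on $C$, and the three points of intersection with the Kummer variety are of the form $\Km(u)$ where $u$ ranges over a coset structure built from divisor classes $p - r$, $q - r$, $p - s$, $q - s$ (the fourth, $\Km(e+q-s)$, being the "missing" fourth point on the line). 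I want to identify when the three collinear Kummer points actually land on $\Km(\Theta^{sm})$.

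The plan is to work on the Jacobian side with Fay's identity made completely explicit, and then to read off the geometry of the canonical curve. Write the four points as $a,b,c,d\in C$. Rewriting each of the three theta–products in Fay's trisecant identity as a section of $2\Theta$ centred at the origin, the identity says precisely that the three Kummer points
\[
\Km(\beta_1),\qquad \Km(\beta_2),\qquad \Km(\beta_3)
\]
are collinear, where $\beta_1=\tfrac12(a-b-c+d)$, $\beta_2=\tfrac12(a+b-c-d)$, $\beta_3=\tfrac12(a-b+c-d)$, so that the differences $\beta_i-\beta_j$ run over $d-b$, $d-c$, $b-c$; this is the advertised four–dimensional family, with $a$ playing the role of an overall translation. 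Since the symmetric theta divisor is $W_{g-1}-\kappa$ for a Riemann constant $\kappa$ (so $2\kappa\sim K$), the line $\ell(a,b,c,d)$ is a trisecant of the theta divisor exactly when each $\beta_i+\kappa$ is represented by an effective divisor $D_i$ of degree $g-1$, i.e.\ a point of $W_{g-1}$. A useful preliminary remark is that these three conditions are not independent: evaluating Fay's identity at $z=0$ gives a relation $c_1\theta(\beta_1)^2+c_2\theta(\beta_2)^2+c_3\theta(\beta_3)^2=0$ in which the $c_i$ are nonzero (products of prime forms at the four distinct points), so the vanishing of any two of $\theta(\beta_1),\theta(\beta_2),\theta(\beta_3)$ forces the third. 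Hence the theta trisecants form a two–dimensional family, matching $\dim\mathcal B^*_3$, and any such trisecant meets $\Km(\Theta)$ in (at least) the three points $\Km(\beta_i)$.

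For the converse I would run this dictionary backwards. Fix a generic $K_0\in\mathcal B^*_3$ and let $\Delta:=\varphi^{-1}(K_0)$ be the associated canonical divisor; by the description of $\mathcal B^*_3$ from Section \ref{branch}, $\Delta$ carries the special configuration of points that makes $K_0$ lie on the branch locus in the relevant way. From that configuration I would extract an effective $D_1\le\Delta$ of degree $g-1$ and points $b,c,d$ among the points of $\Delta$ so that, with $E_1:=\Delta-D_1$ the residual, the linear equivalences $D_1+b\sim D_2+d$ and $D_1+c\sim D_3+d$ have effective solutions $D_2,D_3\le\Delta$ of degree $g-1$; then $\G(D_1)=\G(D_2)=\G(D_3)=K_0$ automatically, since all three residuals sum with their $D_i$ to $\Delta$. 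The last point $a$ is then pinned down, modulo $2$–torsion, by $a\sim D_1+b+c-d-E_1$ (equivalently by $2\beta_1 = D_1 + b + c - d - E_1$ as classes), and the one genuine thing to verify is that this class is effective of degree one, i.e.\ that the linear system $|D_1+b+c-d|$ contains a divisor $\ge E_1$. This is a Riemann--Roch / base-point computation on the canonical curve, and it is exactly here that the defining property of $\mathcal B^*_3$ is used: it is what guarantees $h^0(D_1+b)\ge 2$, the failure of which one checks would make the Gauss images of $D_1$ and $D_2$ distinct. Having produced $(a,b,c,d)\in C^4$ with $\beta_1,\beta_2,\beta_3\in\Theta$ (and with the four points distinct), $\ell(a,b,c,d)$ is the desired theta trisecant lying over $K_0$.

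For the final clause I would combine the forward statement, available from Theorem \ref{thm:proof} — that the intersection points of a theta trisecant with $\Theta^{sm}$ are highest–multiplicity points of the corresponding fibre of $\G$ — with a surjectivity argument. Over a generic $K_0$ the fibre $\G^{-1}(K_0)$ is finite, hence the theta trisecants over $K_0$ are finite in number and each is determined by its (at most three) intersection points with $\Theta^{sm}$; one must show that \emph{every} highest–multiplicity point of $\G^{-1}(K_0)$ occurs among these. I would identify the highest–multiplicity points intrinsically as the divisors $D\le\Delta$ whose position relative to the degeneracy of $\Delta$ yields the maximal local degree of $\G$ — the ramification index being the combinatorial multiplicity $\binom{m}{\lfloor m/2\rfloor}$ (or the corresponding product) coming from the splitting of the degenerate part of $\Delta$ — and then, using the $S_3$–symmetry of Fay's identity in $\{b,c,d\}$ together with the freedom in $a$, show that each such $D$ equals some $D_i$ for a suitable theta trisecant over $K_0$.

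The main obstacle is the existence step of the second paragraph: converting "the configuration cut out by $\mathcal B^*_3$" into an actual quadruple $(a,b,c,d)\in C^4$ requires the precise definition of $\mathcal B^*_3$ from Section \ref{branch} together with a careful control of the special linear system $|D_1+b+c-d|$ via Riemann--Roch, and one must ensure the four points can be chosen distinct so that the prime–form constants $c_i$ do not vanish. Of comparable difficulty is the combinatorial bookkeeping in the last paragraph — checking that all, not just one, of the highest–multiplicity fibre points are reached — and the treatment of the boundary of the family, where $a,b,c,d$ collide and the trisecant degenerates to an inflectional tangent.
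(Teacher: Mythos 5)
Your converse (existence) half is essentially the paper's argument: from $K_0=p+q+r+s+2D\in\mathcal{B}_3^*$ one simply sets $a=\mathcal{O}_C(p+s+D-\kappa)$, $b=\mathcal{O}_C(p+r+D-\kappa)$, $c=\mathcal{O}_C(p+q+D-\kappa)$ and checks the linear equivalences demanded by Fay's formula using $2D-2\kappa\equiv-(p+q+r+s)$; no Riemann--Roch or base-point analysis is needed, so the step you single out as the ``main obstacle'' is not actually an obstacle. The genuine gap is the forward direction, which your proposal does not prove at all: in your last paragraph you invoke ``the forward statement, available from Theorem \ref{thm:proof}'', but Theorem \ref{thm:proof} is precisely half of the statement being proved, so this is circular. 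Nothing in your argument establishes (i) that $\G(x)=\G(y)$ for two smooth theta points whose Kummer images are collinear with a third Kummer point, (ii) that this common Gauss image is a canonical divisor of the form $p+q+r+s+2(P_1+\cdots+P_{g-3})$, i.e.\ a point of $\mathcal{B}_3$, or (iii) that $x$ and $y$ have the maximal multiplicity $2^{g-3}$ in the fibre. Your observation that Fay's identity at $z=0$ forces the third theta value to vanish, and the resulting dimension count ($4-2=2=\dim\mathcal{B}_3^*$), only matches dimensions; it controls values of $\theta$, not gradients, so it cannot yield (i)--(iii).

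For comparison, the paper proves (i) for an arbitrary ppav by translating collinearity, via the addition formula, into a linear relation among the sections $s_{x_i}(z)=\theta(z-x_i)\theta(z+x_i)$, expanding it to second order at the origin to get a vanishing linear combination of the rank-one tensors $(\nabla_{x_i}\theta)(\nabla_{x_i}\theta)^t$, and applying Igusa's lemma on such sums; this is the key idea missing from your proposal. For (ii) one must first know that the given trisecant is of Fay type --- this is the trisecant theorem (Gunning, Welters, Arbarello--De Concini, Shiota, Krichever), which you assume tacitly when you parametrize ``the'' trisecants by quadruples on $C$ --- and then run the divisor bookkeeping of Lemmas \ref{effe} and \ref{lem:trick}, which crucially uses the single canonical divisor $K_0=D_1+E_1=D_2+E_2$ furnished by (i). For (iii) the paper proves the multiplicity formula $\prod_i\binom{n_i}{l_i}$ by flatness of the Gauss map together with a specialization to a reduced canonical divisor; your $\binom{m}{\lfloor m/2\rfloor}$ heuristic gestures at this but is not an argument, and this computation is also what identifies the six highest-multiplicity points over a generic $K_0\in\mathcal{B}_3^*$ as $\pm a,\pm b,\pm c$, which is how the final clause (all highest-multiplicity points lie on trisecants) is actually established.
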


The relation between trisecants and the Gauss map was in some sense hidden in the literature about the Torelli Theorem. Indeed, on one hand, the classical proof of the Torelli Theorem given by Andreotti in \cite{A} relies on the study of the branch locus of the Gauss map; on the other hand, it is possible to prove the Torelli Theorem as a consequence of the analysis of trisecants of a Jacobian, cf. \cite{G} and \cite[Page 267]{ACGH}.

In Section \ref{multi}, we generalize part of our results to multisecants of the theta divisor. Recall that multisecants do not characterize Jacobians, so we do not know if this generalization can be pushed any further.

In Section \ref{Gamma}, we investigate the linear span in $\mathbb{P}^{2^g-1}$ of fibers of the Gauss map, and we point out a relation with the $\Gamma_{00}$ Conjecture.

It is now well-known (see \cite{BeauvilleDebarre,GK}) that if $(A,\Theta)$ is a general Prym variety, then its Kummer variety possesses a family of quadrisecant planes. We do not know what relation these planes have with respect to the Gauss map, and this is certainly an interesting direction for research. By using Proposition \ref{p1} below, we can only prove that if we have four points of the theta divisor of a Prym that lie on a quadrisecant plane on the Kummer variety, then the linear span of the images of these points via the Gauss map is at most one-dimensional.

\medskip

\noindent\textit{Acknowledgements:} The first author would like to thank Sapienza Universit\`a di Roma as well as the Universit\`a Roma Tre where this work was started.

\end{section}

\begin{section}{Preliminaries about multisecants of the theta divisor}

A multisecant linear space is a $k$ dimensional linear subspace of $\mathbb{P}H^0(A,2\Theta)^*$ which intersects the Kummer variety in at least $k+2$ points. We say that this is a multisecant of the theta divisor if these $k+2$ points are on the image of the theta divisor. In this section, we investigate the behavior of the Gauss map on these intersection points.

\begin{proposition}\label{p1}
Let $A$ be an abelian variety and let $\Theta$ be a symmetric divisor that induces a principal polarization on $A$. Let $x_1\in A$ and $x_2,\ldots,x_r\in\Theta $ be points such that the points
$$\Km(x_1),\dots,\Km(x_r)$$ 
are contained in an $r-2$ dimensional linear subspace of  $\mathbb{P}H^0(A,2\Theta)^*$, but any subset  of order $r-1$ is in general position. Then $x_1$ lies in $\Theta$, and $\mathcal{G}(x_1),\ldots,\mathcal{G}(x_r)$, if defined,  are contained in an $(\lfloor r/2\rfloor-1)$ linear subspace of $\mathbb{P}^{g-1}$. Moreover, if $r-1$ points lie on the singular locus of $\Theta$, the $r$-th one also does. 

\end{proposition}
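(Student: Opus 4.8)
The plan is to turn the secancy hypothesis into a functional identity for the theta function via Riemann's bilinear addition theorem, and then to extract all three conclusions by reading off the low-order Taylor coefficients of that identity at the origin.

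First I would fix lifts of $x_1,\dots,x_r$ to $\C^g$ (still denoted $x_j$) and work with the standard basis $\{\theta_\sigma\}$ of second-order theta functions, for which $\Km(z)=[\dots:\theta_\sigma(z):\dots]$ and $\theta(u+v)\theta(u-v)=\sum_\sigma\theta_\sigma(u)\theta_\sigma(v)$ for all $u,v\in\C^g$, where $\theta$ is the Riemann theta function with $\Theta=\{\theta=0\}$. The assumption that $\Km(x_1),\dots,\Km(x_r)$ span an $(r-2)$-plane while every $(r-1)$-subset spans it means that the matrix $(\theta_\sigma(x_j))$ has rank exactly $r-1$ with one-dimensional left kernel; choosing a generator $(c_1,\dots,c_r)$ of this kernel, every $c_j$ is nonzero, since a vanishing coordinate would produce a linear dependence among $r-1$ of the Kummer points. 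Substituting $\sum_j c_j\theta_\sigma(x_j)=0$ into the addition theorem with $u=x_j$, $v=y$ gives the identity
\[
\sum_{j=1}^r c_j\,\theta(x_j+y)\,\theta(x_j-y)=0\qquad\text{for all }y\in\C^g,
\]
which I will call $(\star)$.

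Setting $y=0$ in $(\star)$ and using $x_2,\dots,x_r\in\Theta$ gives $c_1\theta(x_1)^2=0$, hence $\theta(x_1)=0$, i.e.\ $x_1\in\Theta$. Now all $\theta(x_j)$ vanish, so $\theta(x_j+y)=v_j\cdot y+O(|y|^2)$ and $\theta(x_j-y)=-v_j\cdot y+O(|y|^2)$ with $v_j:=\nabla\theta(x_j)\in\C^g$, whence $\theta(x_j+y)\theta(x_j-y)=-(v_j\cdot y)^2+O(|y|^3)$; comparing the order-two parts in $(\star)$ yields the identity of quadratic forms $\sum_j c_j(v_j\cdot y)^2\equiv 0$, equivalently
\[
\sum_{j=1}^r c_j\,v_jv_j^{\,T}=0,
\]
which I will call $(\star\star)$. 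Recall that $\mathcal{G}(x_j)=[v_j]$ exactly when $v_j\neq 0$, while $x_j$ is a singular point of $\Theta$ precisely when $v_j=0$. The singular-locus statement now follows at once: if $v_j=0$ for every index except one, say $m$, then $(\star\star)$ reads $c_mv_mv_m^{\,T}=0$, forcing $v_m=0$.

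For the bound on the span of the Gauss images, let $M=[\,v_1\mid\dots\mid v_r\,]$ be the $g\times r$ matrix of rank $d:=\dim\Span\{v_1,\dots,v_r\}$ and $D=\operatorname{diag}(c_1,\dots,c_r)$, which is invertible. Identity $(\star\star)$ says $MDM^{T}=0$, i.e.\ the $d$-dimensional subspace $\operatorname{im}(M^{T})\subseteq\C^r$ is totally isotropic for the non-degenerate symmetric bilinear form $\langle p,q\rangle:=p^{T}Dq$ on $\C^r$. Since a non-degenerate symmetric form on $\C^r$ has isotropic subspaces of dimension at most $\lfloor r/2\rfloor$, we get $d\le\lfloor r/2\rfloor$; as the defined images $\mathcal{G}(x_j)=[v_j]$ all lie in $\mathbb{P}(\Span\{v_j\})\cong\mathbb{P}^{d-1}$, they lie in a linear subspace of dimension $d-1\le\lfloor r/2\rfloor-1$, as required.

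I expect the only genuinely delicate point to be the extraction of $(\star)$ and $(\star\star)$ — the normalization of lifts and second-order theta functions, and the comparison of Taylor coefficients — together with the key recognition that $(\star\star)$ expresses exactly that $\operatorname{im}(M^{T})$ is an isotropic subspace of a non-degenerate quadratic space, which is what produces the $\lfloor r/2\rfloor$ bound. Degenerate configurations (some $v_j=0$, or some $[v_i]=[v_j]$ coinciding) only decrease $d$ and hence make the inequality easier, so they need no separate treatment.
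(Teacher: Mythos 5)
Your proof is correct and follows essentially the same route as the paper: translate the secancy relation into the identity $\sum_j c_j\,\theta(x_j+y)\theta(x_j-y)=0$ via the addition theorem, evaluate at $y=0$ to get $x_1\in\Theta$, and compare second-order terms to obtain $\sum_j c_j v_jv_j^{\,T}=0$ with all $c_j\neq 0$. The only difference is that where the paper cites Igusa's lemma for the $\lfloor r/2\rfloor$ bound, you prove it directly by observing that $\operatorname{im}(M^{T})$ is totally isotropic for the non-degenerate form $\operatorname{diag}(c_1,\dots,c_r)$ — a clean inline substitute for the reference.
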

\begin{proof}
Using theta functions with characteristics, the fact that the points $\Km(x_i)$ are not in general position can be translated into the following linear relations:
$$\Theta[\epsilon](\tau,x_1)=\sum_{k=2}^{r}\alpha_k\Theta[\epsilon](\tau,x_k)$$
for all $\epsilon\in(\mathbb{Z}/2\mathbb{Z})^g$. Let $s_x(z)$ be the section of $2\Theta$ defined as $\theta(z-x)\theta(z+x)$, where $\theta$ is a non-zero section of $H^0(A,\Theta)$. Since $\Theta$ is symmetric we have that $\theta$ is even. By applying the Addition Formula for theta functions, we obtain that there exist $\beta_2,\ldots,\beta_r\in\mathbb{C}$ such that
$$s_{x_1}=\sum_{k=2}^r\beta_k s_{x_k}.$$
This already shows that $x_1$ is in $\Theta$, and proves the statement about singularities.

Denote by $\nabla_x\theta$ the gradient of the theta function evaluated at $z=x$. Since in coordinates $\G(x_i)=\nabla_{x_i}\theta$, to prove the proposition we have to show that the $\nabla_{x_1}\theta$ span a $(\lfloor r/2\rfloor-1)$ dimensional linear space.

Now for $x\in\Theta$, we have $\frac{\partial^2s_x}{\partial z_i\partial z_j}(0)=2\frac{\partial\theta}{\partial z_i}(x)\frac{\partial\theta}{\partial z_j}(x)$, hence
$$\frac{\partial\theta}{\partial z_i}(x_1)\frac{\partial\theta}{\partial z_i}(x_1)=\sum_{k=2}^r\beta_k\frac{\partial\theta}{\partial z_i}(x_k)\frac{\partial\theta}{\partial z_i}(x_k).$$
Therefore
$$(\nabla_{x_1}\theta)(\nabla_{x_1}\theta)^t=\sum_{k=2}^r\beta_k(\nabla_{x_k}\theta)(\nabla_{x_k}\theta)^t\,,$$
and we can assume that $\beta_k\neq 0$ for all $k$, since any subset of order $r-1$ of $\Km(x_1),\ldots,\Km(x_r)$ is in general position.

We conclude thanks to the following lemma.
\begin{lemma}[Lemma 1 \cite{Ig}] Let  $V$ be a vector space over a field $K$, take $r$ vectors $v_1, \dots, v_r$ such that
$$\sum_{i=1}^r a_i v_i\otimes  v_i=0$$ 
for some non-zero $a_1,\dots, a_r\in K$. Then, the vectors $v_1, \dots, v_r$ span a space of dimension at most $\lfloor r/2\rfloor$.
\end{lemma}

\end{proof}

Let us spell out our result for trisecants, so $r=3$ and $k=1$.

\begin{corollary}\label{c1} 
Let us assume that points the $x_1,x_2,x_3 \in K(A)$ lie on a trisecant.
\begin{enumerate}
\item If two of them, say  $x_1,x_2,$ are in $\Theta$, then also $x_3$ is in $\Theta$.

\item If the three  points are in  the theta divisor  and two of them, say  $x_1,x_2,$ are singular, then also $x_3$ is singular.

\item If the three  points are in  the theta divisor, then $\mathcal{G}(x_i) $ is constant for all $x_i \in{\Theta} ^{sm} $.
\end{enumerate}
\end{corollary}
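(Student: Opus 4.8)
The plan is to obtain all three parts as the special case $r=3$ (so $k=1$) of Proposition \ref{p1}. To say that $x_1,x_2,x_3$ lie on a trisecant is precisely to say that $\Km(x_1),\Km(x_2),\Km(x_3)$ are contained in a $1=(r-2)$-dimensional linear subspace of $\mathbb{P}H^0(A,2\Theta)^*$; since a trisecant meets $K(A)$ in three \emph{distinct} points, any two of them are automatically in general position, so the hypotheses of Proposition \ref{p1} are met once we relabel the points so that the one not assumed to lie on $\Theta$ is listed first.

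For part (1) I would relabel so that $x_3$ plays the role of the point called $x_1$ in Proposition \ref{p1}, while $x_1,x_2$ are the two points known to lie on $\Theta$; the conclusion ``$x_1\in\Theta$'' of the proposition then says exactly that $x_3\in\Theta$. For part (2) I would quote the final sentence of Proposition \ref{p1} verbatim with $r=3$: if $r-1=2$ of the three points lie on the singular locus of $\Theta$, then so does the third. For part (3) I would apply the proposition with all three points on $\Theta$; it then gives that $\G(x_1),\G(x_2),\G(x_3)$, wherever they are defined, are contained in a linear subspace of $\mathbb{P}^{g-1}$ of dimension $\lfloor r/2\rfloor-1=\lfloor 3/2\rfloor-1=0$, i.e.\ in a single point, so $\G$ takes one and the same value at each $x_i\in\Theta^{sm}$.

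I do not expect a genuine obstacle here: the content is entirely in Proposition \ref{p1}, and the only point that deserves a sentence is why ``lying on a trisecant'' supplies the general-position hypothesis — it is because the three points of a trisecant are distinct, a line being already determined by any two of them. It is also worth noting that part (3) is the feature special to \emph{tri}secants: the bound $\lfloor r/2\rfloor-1$ vanishes exactly for $r=3$, while for $r\ge 5$ one only obtains a positive-dimensional span of the Gauss images, which is why the treatment of general multisecants in Section \ref{multi} yields a correspondingly weaker statement.
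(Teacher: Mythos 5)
Your proposal is correct and matches the paper exactly: the corollary is stated there as an immediate specialization of Proposition \ref{p1} to $r=3$, $k=1$, with no separate argument given, and your relabeling for part (1), the verbatim use of the final sentence for part (2), and the observation that $\lfloor 3/2\rfloor-1=0$ forces a constant Gauss value for part (3) are precisely the intended reading. Your side remark about why distinctness of the three trisecant points supplies the general-position hypothesis is a sensible addition rather than a deviation.
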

  
\end{section}

\begin{section}{Branch locus of the Gauss map and construction of trisecants}\label{branch}

Let $A=J(C)$ be the Jacobian of a smooth projective curve. We fix a symmetric theta divisor $\Theta$, and let $\kappa$ be the associated theta characteristic, which we will also use as a divisor on the curve. Denote by $\mathbb{P}^{g-1}$ the canonical system $|K|=H^0(C,K)$, which is canonically isomorphic to $\mathbb{P}( T_0J(C))^*$.

Let $\varphi:C\to\mathbb{P}^{g-1}$ be the canonical map. We introduce a stratification $\mathcal{B}_\ell$ of the branch locus of the Gauss map $\mathcal{G}$. For $1\leq\ell\leq g-1$ let
$$\mathcal{B}_\ell:=\left\{H\in(\mathbb{P}^{g-1})^*:H\cap \varphi(C)\mbox{ is of the form }\sum_{i=1}^{2\ell-2}P_i+2\sum_{i=1}^{g-\ell}Q_i\mbox{ for }P_i,Q_j\in \varphi(C)\right\}.$$
Notice that
$$\mathcal{B}_{1}\subseteq\mathcal{B}_{2}\subseteq\cdots\subseteq\mathcal{B}_{g-1}$$
and $\mathcal{B}_1$ is the set of effective theta characteristics.
\begin{rem}
The locus $\mathcal{B}_{g-1}$ is the branch locus of the Gauss map. The dual variety of this locus is the canonical curve, and this is indeed Andreotti's proof of the Torelli Theorem, cf. \cite{A}. The dual of the loci $\mathcal{B}_{\ell}$ should be related, at least for large values of $\ell$, to multisecant varieties of the canonical curve and special linear series. We do not have a nice description of these loci.
\end{rem}

We start off computing the dimension of the loci $\mathcal{B}_{\ell}$.

\begin{lemma}
\label{lem:exis}
For every $1\leq \ell \leq g-1$, the space $\mathcal{B}_{\ell}$ is of dimension at least $\ell-1$. Moreover, the union $\mathcal{B}_{\ell}^*$ of irreducible components of dimension exactly $\ell-1$ such that the generic geometric points represent divisors where the points $P_i$ are all distinct is non-empty.
\end{lemma}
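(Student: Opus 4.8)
I would realise $\mathcal{B}_{\ell}$ as the image of an incidence variety, produce an \emph{honest} point on each $\mathcal{B}_{\ell}$ by descending induction, and extract the dimensions from Krull's principal ideal theorem together with a transversality computation.

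Write $D_H$ for the canonical divisor cut on $C$ by a hyperplane $H$, and set
$$\widetilde{\mathcal{I}}_{\ell}=\left\{(H,Q_1,\dots,Q_{g-\ell})\in(\mathbb{P}^{g-1})^{*}\times C^{g-\ell}\ :\ D_H\geq 2Q_1+\cdots+2Q_{g-\ell}\right\}.$$
On the open subset where the $Q_j$ are pairwise distinct, $\widetilde{\mathcal{I}}_{\ell}$ is cut out locally by $2(g-\ell)$ equations --- prescribing at each $Q_j$ a $1$-jet of vanishing of the section defining $H$ --- inside the smooth variety $(\mathbb{P}^{g-1})^{*}\times C^{g-\ell}$ of dimension $2g-1-\ell$; hence every component meeting that open subset has dimension at least $\ell-1$. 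The first projection maps $\widetilde{\mathcal{I}}_{\ell}$ onto $\mathcal{B}_{\ell}$, since $D_H\geq 2\sum_j Q_j$ forces $D_H=2\sum_j Q_j+R$ with $R=\sum_i P_i$ effective of degree $2\ell-2$; and on the locus of honest tuples, those for which the $Q_j$ are distinct, are exactly the tangency points of $H$, and avoid the reduced support of $R$, this projection is injective. So it is enough to exhibit, for each $\ell$, an honest point at which the $2(g-\ell)$ local equations are independent.

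For $\ell=g-1$ one may take $\mathcal{B}_{g-1}$ to be the branch divisor of $\mathcal{G}$, a hypersurface (see the Remark); its general point is a hyperplane tangent to the canonical curve at a single point and transverse elsewhere, hence honest for $\mathcal{B}_{g-1}^{*}$ (for hyperelliptic $C$ the same holds with ``tangent at a single point'' replaced by ``passing through exactly one Weierstrass point''). For the step $\ell\to\ell-1$ with $\ell\geq 3$, start from an honest point $H_{\ell}$ with $D_{H_{\ell}}=2\sum_{j=1}^{g-\ell}Q_j+\sum_{i=1}^{2\ell-2}P_i$, all points distinct, lying on a component $Z_{\ell}$ of $\mathcal{B}_{\ell}$ of dimension $\ell-1$. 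The locus in $Z_{\ell}$ where $D_H$ acquires an extra double point is a non-empty divisor (non-empty because $\dim Z_{\ell}\geq 2$, a divisor because $H_{\ell}$ lies off it); along the component of this divisor on which the extra double point arises from a collision of two of the $P_i$, a general member $H_{\ell-1}$ has $D_{H_{\ell-1}}=2\sum_{j=1}^{g-\ell+1}Q_j+\sum_{i=1}^{2\ell-4}P'_i$ with all points distinct, so it is an honest point of $\mathcal{B}_{\ell-1}$, lying by the above estimate on a component of dimension at least $\ell-2$. Finally $\ell=1$ is handled directly: there the condition on the $P_i$ is vacuous, $\mathcal{B}_1^{*}$ is the union of the zero-dimensional components of $\mathcal{B}_1$, and these are the theta characteristics $L$ with $h^0(L)=1$, one of which (necessarily odd) exists on every smooth curve.

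It remains to see that each honest component produced above has dimension exactly $\ell-1$, equivalently that the $2(g-\ell)$ equations defining $\widetilde{\mathcal{I}}_{\ell}$ near an honest point $(H,Q_1,\dots,Q_{g-\ell})$ are independent; I would check this by computing the differential of the map recording the $1$-jets of the defining section of $H$ at the $Q_j$, with the distinctness of the $P_i$ and $Q_j$ forcing surjectivity. The main obstacles I anticipate are: ensuring, uniformly over all smooth curves, that $R$ is reduced at a general point of the relevant component --- which requires separate care when $C$ is hyperelliptic or otherwise special and the systems $|K-2\sum_j Q_j|$ acquire base points --- and establishing the transversality statement just used.
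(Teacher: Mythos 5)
Your jet--count lower bound is essentially the paper's own first step in different clothes (the paper intersects $|K|\subset C_{2g-2}$ with the image of $C_{g-\ell}\times C_{2\ell-2}\to C_{2g-2}$, $(D,E)\mapsto 2D+E$, and invokes the dimension theorem in the smooth variety $C_{2g-2}$, which moreover bounds \emph{every} component, not only those meeting the distinct-$Q_j$ locus). The gaps are in your existence argument. In the descending step $\ell\to\ell-1$, the assertion that the locus in $Z_\ell$ where $D_H$ acquires an extra double point is non-empty ``because $\dim Z_\ell\geq 2$'' is unsupported: $Z_\ell$ is not a linear system, and the locus you must meet (essentially $\mathcal{B}_{\ell-1}$) is not a hypersurface of $(\mathbb{P}^{g-1})^*$, so positive-dimensionality of $Z_\ell$ alone produces nothing; and even granting non-emptiness you would still need a component of the degeneracy locus along which specifically two of the $P_i$ collide (rather than a $P_i$ running into a $Q_j$, two $Q_j$ colliding, or a worse degeneration), with honest general member --- none of which is argued. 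Worse, the transversality you defer to the end is false in the generality you need: at an honest point $(H,Q_1,\dots,Q_{g-\ell})$ the differential you describe is surjective if and only if the evaluation map $H^0(C,K)\to\bigoplus_j H^0(K|_{Q_j})$ is surjective, i.e.\ by Riemann--Roch if and only if $h^0(\mathcal{O}_C(Q_1+\cdots+Q_{g-\ell}))=1$, and distinctness of the $P_i,Q_j$ does not imply this. On a hyperelliptic curve the double points of a tangent hyperplane section come in hyperelliptic-conjugate pairs, so $h^0(\sum_j Q_j)\geq 2$, and the corresponding components of $\mathcal{B}_\ell$ are honest but of dimension strictly larger than $\ell-1$ (e.g.\ for $\ell=g-2$ they have dimension $g-2$, not $g-3$). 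So the heart of the lemma --- landing on a component of dimension \emph{exactly} $\ell-1$ with honest generic point, i.e.\ $\mathcal{B}_\ell^*\neq\emptyset$ --- is not established, and your induction can get stuck on exactly such excess components; this is the ``special curves'' obstacle you flag, and it is not a routine check.

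The paper avoids both difficulties with one idea your proposal misses: it anchors the argument at $\mathcal{B}_1$ and ascends rather than descends. Since $\mathcal{B}_1$, the locus of effective theta characteristics, is non-empty and contained in every $\mathcal{B}_\ell$, non-emptiness is immediate; every component of $\mathcal{B}_\ell$ has dimension at least $\ell-1$ by the intersection bound above; and because each inclusion in the chain $\mathcal{B}_1\subseteq\mathcal{B}_2\subseteq\cdots\subseteq\mathcal{B}_\ell$ has codimension at most one, a component through $\mathcal{B}_1$ of dimension greater than $\ell-1$ would force $\dim\mathcal{B}_1>0$, contradicting its zero-dimensionality; hence near $\mathcal{B}_1$ the dimension is exactly $\ell-1$ and these components give $\mathcal{B}_\ell^*\neq\emptyset$. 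If you want to salvage your framework, replace the unsupported degeneration step and the transversality claim by this containment of $\mathcal{B}_1$ and the codimension-one chain; no transversality statement uniform over all curves is then required.
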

\begin{proof}

Let $C_k$ be the symmetric product of $k$ copies of the curve $C$. The linear system $|K|=(\mathbb{P}^{g-1})^*$ can be embedded in $C_{2g-2}$, and the space $\mathcal{B}_{\ell}$ is its intersection with the image of the map
 $$C_{g-\ell}\times C_{2\ell-2}\to  C_{2g-2}$$
 $$(D, E) \mapsto 2D+E$$
This intersection, if non-empty, is at least $\ell-1$ dimensional, and $\mathcal{B}_{\ell-1}$ is of codimenison at most one in $\mathcal{B}_{\ell}$.

The locus $\mathcal{B}_1$ is the locus of effective theta characteristics, and it is known to be non-empty and zero dimensional. The loci $\mathcal{B}_{\ell}$ contain $\mathcal{B}_1$, hence they are non-empty. Locally around $\mathcal{B}_1$, the dimension of $\mathcal{B}_{\ell}$ must be exactly $\ell-1$, otherwise $\mathcal{B}_1$ would have dimension strictly greater than zero, hence $\mathcal{B}_{\ell}^*$ is non-empty.

\end{proof}

The locus $\mathcal{B}_{g-1}$, being the branch locus of the Gauss map, is of codimension one in $(\mathbb{P}^{g-1})^*$, so it is equal to $\mathcal{B}_{g-1}^*$. The generic tangent hyperplane to the canonical curve is tangent to a single point, cf \cite[Corollary 2.4]{Zak}, hence $\mathcal{B}_{g-2}$ is of dimension $g-3$ and it is equal to $\mathcal{B}_{g-2}^*$. The locus $\mathcal{B}_1$, being the locus of theta characteristics, is zero dimensional and equal to $\mathcal{B}_1^*$. We do not know about the other $\mathcal{B}_{\ell}$.

We are now going to use the locus $\mathcal{B}_3^*$ to construct trisecants of the theta divisor. Recall that, thanks to Fay's trisecant formula, we can construct trisecants out of four points on the curve and a consistent way to divide by two on the Jacobian; more explicitly we have the following theorem, cf. \cite{F} or \cite[Section IIIb]{M}.
\begin{theorem}[Fay's trisecant formula]\label{thm:Faytrisecant}
Let $p$, $q$, $r$ and $s$ be points on $C$, and $a$, $b$ and $c$ points in $A=J(C)$ such that $a\in 2^{-1}\mathcal{O}_C(p-q-r+s)$, $b\in 2^{-1}\mathcal{O}_C(p-q+r-s)$ and $c\in 2^{-1}\mathcal{O}_C(p+q-r-s)$, with $a+b=\mathcal{O}_C(p-q)$ and $a+c=\mathcal{O}_C(p-r)$. Then the images of $a$, $b$ and $c$ on the Kummer variety lie on a trisecant.
\end{theorem}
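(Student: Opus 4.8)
The plan is to deduce the theorem from Fay's three-term addition identity for theta functions; granted that identity, the trisecant assertion is only a matter of matching up arguments. As in the proof of Proposition~\ref{p1}, for $x\in A$ write $s_x(z):=\theta(z-x)\theta(z+x)\in H^{0}(A,2\Theta)$; by the Addition Formula, the coefficients of $s_x$ with respect to a fixed basis of $H^{0}(A,2\Theta)$ are exactly the homogeneous coordinates of $\Km(x)$. Hence $\Km(a),\Km(b),\Km(c)$ are collinear in $\mathbb{P}H^{0}(A,2\Theta)^{*}$ precisely when $s_a,s_b,s_c$ are linearly dependent in $H^{0}(A,2\Theta)$. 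Moreover, assuming as we may that $p,q,r,s$ are pairwise distinct, one has $\Km(a)\neq\Km(b)$, $\Km(a)\neq\Km(c)$ and $\Km(b)\neq\Km(c)$ (for instance $\Km(a)=\Km(b)$ means $a=\pm b$, which by the relations recorded below forces $r=s$ or $p=q$), so the three points are distinct and the line through them is a genuine trisecant. Everything thus reduces to exhibiting a nontrivial relation $\lambda_a s_a+\lambda_b s_b+\lambda_c s_c=0$ with all three scalars non-zero.

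First I would unwind the halving hypotheses. Combining $2a=\mathcal{O}_C(p-q-r+s)$ with $a+b=\mathcal{O}_C(p-q)$ and $a+c=\mathcal{O}_C(p-r)$ gives
$$b-a=\mathcal{O}_C(r-s),\qquad c-a=\mathcal{O}_C(q-s),\qquad b+c=\mathcal{O}_C(p-s),\qquad 2b=\mathcal{O}_C(p-q+r-s),$$
so that $a,b,c$ are exactly the three half-differences attached to the three ways of partitioning $\{p,q,r,s\}$ into two pairs. Then I would invoke Fay's three-term identity in one of its standard forms (see \cite{F} or \cite[Section~IIIb]{M}): for pairwise distinct $p,q,r,s\in C$ and $w\in\mathbb{C}^{g}$, it is a relation among the three $\theta$-products whose pairs of arguments are the translates of $w$ by $\{0,\ \mathcal{O}_C(p-q)+\mathcal{O}_C(r-s)\}$, by $\{\mathcal{O}_C(p-q),\ \mathcal{O}_C(r-s)\}$, and by $\{\mathcal{O}_C(p-s),\ \mathcal{O}_C(r-q)\}$, with respective coefficients equal, up to sign, to the three prime-form products $E(p,q)E(r,s)$, $E(p,r)E(q,s)$, $E(p,s)E(q,r)$. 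Specialising $w=z-b$ and using the displayed relations (note $\mathcal{O}_C(p-q)+\mathcal{O}_C(r-s)=2b$, $\mathcal{O}_C(p-s)=b+c$ and $\mathcal{O}_C(r-q)=b-c$), the three pairs of arguments become $\{z-b,z+b\}$, $\{z+a,z-a\}$ and $\{z+c,z-c\}$, so Fay's identity turns into a relation
$$\lambda_a\,s_a(z)+\lambda_b\,s_b(z)+\lambda_c\,s_c(z)=0$$
in which $\{\lambda_a,\lambda_b,\lambda_c\}$ is, up to signs, $\{E(p,q)E(r,s),\,E(p,r)E(q,s),\,E(p,s)E(q,r)\}$, hence consists of non-zero scalars. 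This is the required relation, and the theorem follows.

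It remains to supply a proof of Fay's identity, which I would give in the classical way. Freeze $w$ and the points $p,q,r$, and regard the three $\theta$-products above, each divided by one of them, as functions of the single remaining point $s=x\in C$; the prime-form factors are inserted exactly so that these become single-valued meromorphic functions of $x$. Using the Riemann vanishing theorem to locate the zeros of the denominator and to verify that the numerator vanishes at each of them, one concludes that ``left side minus right side'', suitably divided, is a holomorphic function on $C$, hence constant; evaluating at $x=q$ or $x=r$, where the prime forms force vanishings and the $\theta$-products degenerate in a controlled way, pins the constant down to $0$. Alternatively one may simply quote this computation from \cite{F} or \cite[Section~IIIb]{M}.

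The main obstacle, and essentially the only one, is the bookkeeping of the two ``square-root'' structures at play: the prime form is a section of a square root of a line bundle in each of its two variables, and $\theta$ is only quasi-periodic, so one must check that every product written above is a genuinely single-valued meromorphic object in the variable over which zeros are being counted, and must keep track of half-integer characteristics throughout. Relatedly, the hypotheses $a+b=\mathcal{O}_C(p-q)$ and $a+c=\mathcal{O}_C(p-r)$ are essential rather than cosmetic: from $2a,2b,2c$ alone one recovers $a,b,c$ only up to translation by $2$-torsion, and it is precisely this normalisation that makes the substitution $w=z-b$ align the six theta-arguments on the nose, with no leftover $2$-torsion.
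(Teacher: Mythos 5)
The paper does not prove this statement at all: it is quoted as a known result with references to Fay \cite{F} and Mumford \cite[Section IIIb]{M}, and your argument is exactly the standard one from those sources — encode $\Km(x)$ by $s_x(z)=\theta(z-x)\theta(z+x)$, substitute $w=z-b$ into Fay's three-term identity so the six theta-arguments become $z\pm a$, $z\pm b$, $z\pm c$ (your unwinding $b-a=\mathcal{O}_C(r-s)$, $c-a=\mathcal{O}_C(q-s)$, $b+c=\mathcal{O}_C(p-s)$ checks out, and the normalisations $a+b=\mathcal{O}_C(p-q)$, $a+c=\mathcal{O}_C(p-r)$ do exactly the job you attribute to them), and read off the linear dependence with nonvanishing prime-form coefficients. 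Your proposal is correct (with the harmless restriction to pairwise distinct $p,q,r,s$, which is all the paper ever uses), so it matches the intended, cited proof.
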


We are now ready to construct trisecants of the theta divisor.

\begin{theorem}[Existence of trisecants of the theta divisor]\label{thm:exist}
Let $K_0=p+q+r+s+2D$ be a point of $\mathcal{B}_3$ as in Lemma \ref{lem:exis}. Let $\kappa$ be the theta characteristic associated to the theta divisor $\Theta$, and let 
 $$a=\mathcal{O}_C(p+s+D-\kappa) \,  , \,\, b =\mathcal{O}_C(p+r+D - \kappa) \, ,\, c=\mathcal{O}_C(p+q+D-\kappa)\,.$$
 Then $a,b,c\in\Theta$, their images in the Kummer variety lie on a trisecant, and the Gauss map evaluated at any of these points (if defined) equals $K_0$. Furthermore, if $K_0$ is generic in $\mathcal{B}_3^*$, then these three points are distinct.
\end{theorem}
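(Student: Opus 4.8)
The plan is to verify the three assertions of the theorem in order: that $a,b,c$ lie on $\Theta$, that their Kummer images lie on a trisecant, that the Gauss map sends each of them to $K_0$, and finally that they are generically distinct. For the membership in $\Theta$: recall $\Theta$ is the image of $W_{g-1}$ under translation by $-\kappa$, so $x\in\Theta$ iff $x+\kappa$ is an effective divisor class of degree $g-1$. Since $K_0=p+q+r+s+2D\in\mathcal B_3$, the divisor $D$ has degree $g-3$ and $p+q+r+s+2D$ is a canonical divisor; hence each of $p+s+D$, $p+r+D$, $p+q+D$ has degree $g-1$ and is manifestly effective, so $a,b,c\in\Theta$ immediately.

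For the trisecant claim I would invoke Fay's formula (Theorem \ref{thm:Faytrisecant}) with the four curve points $p,q,r,s$. The point is that $a,b,c$ as defined are exactly (one choice of) the three square roots appearing there: one computes $2a=\mathcal O_C(p+s+D-\kappa)+\mathcal O_C(p+s+D-\kappa)=\mathcal O_C(2p+2s+2D-2\kappa)$, and using $2\kappa=K=p+q+r+s+2D$ this equals $\mathcal O_C(p-q-r+s)$, so $a\in 2^{-1}\mathcal O_C(p-q-r+s)$; similarly $2b\in\mathcal O_C(p-q+r-s)$ and $2c\in\mathcal O_C(p+q-r-s)$. One also checks the compatibility conditions $a+b=\mathcal O_C(p-q)$ and $a+c=\mathcal O_C(p-r)$ directly from the definitions (again substituting $2D = K-p-q-r-s$ when simplifying $a+b = \mathcal O_C(2p+r+s+2D-2\kappa)$). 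Fay's formula then gives the trisecant.

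For the Gauss map: in the Jacobian picture, $\G$ at a smooth point $x\in\Theta$ sends $x$ to the linear span of $\varphi(x+\kappa)$ in $\mathbb P^{g-1}$, i.e. to the hyperplane cut out on the canonical curve by the effective canonical divisor containing $x+\kappa$. Here $a+\kappa=p+s+D$, and the unique canonical divisor containing it (when $a$ is a smooth point, so $h^0(a+\kappa)=1$ and this canonical divisor is unique) is exactly $K_0=p+q+r+s+2D$; the same holds for $b+\kappa=p+r+D$ and $c+\kappa=p+q+D$. Hence $\G(a)=\G(b)=\G(c)=K_0$ whenever defined. The one subtlety is to note that ``$\G(x)=K_0$'' is the correct reading of ``$\G(x)$ equals the point $K_0$ of $\mathcal B_3\subset(\mathbb P^{g-1})^*$'', which is just the identification $\mathbb P T_0 A^*\cong |K|$ already set up in the paper.

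For distinctness when $K_0$ is generic in $\mathcal B_3^*$: by definition of $\mathcal B_3^*$ the four points $p,q,r,s$ are distinct for generic $K_0$, and moreover $D$ is a generic effective divisor of degree $g-3$, so generically $p,q,r,s\notin\operatorname{supp}D$ and the divisors $p+s+D$, $p+r+D$, $p+q+D$ are pairwise distinct as divisors; since for generic $K_0$ these are general enough that $h^0=1$, distinct divisors give distinct points of $W_{g-1}$, hence distinct $a,b,c$. The one genuinely delicate point — and the step I would expect to need the most care — is this last $h^0=1$ claim together with ensuring $\G$ is actually defined at $a,b,c$ for generic $K_0$: one must argue that the generic divisor class of the form $p+s+D$ with $p+q+r+s+2D$ canonical is a nonsingular point of $\Theta$, equivalently that it is not a base point of $|K|$-type degeneracy. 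I would handle this by a dimension count inside $C_{g-1}$: the locus of effective divisors of degree $g-1$ with $h^0\geq 2$ is $W^1_{g-1}$, of dimension $\leq g-3 < g-1$, while the family of divisors $p+s+D$ arising from $\mathcal B_3^*$ has dimension $2 + (g-3) = g-1$ minus the codimension coming from the canonical constraint, and one checks it is not contained in $W^1_{g-1}$; alternatively, since $\mathcal B_1\subset\mathcal B_3^*$ consists of effective theta characteristics which are smooth points of $\Theta$ for a generic curve, semicontinuity propagates smoothness to the generic point of $\mathcal B_3^*$. Either way this is a routine but not entirely automatic verification, and it is where I would spend the bulk of the proof-writing effort.
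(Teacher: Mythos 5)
Your proposal is correct and follows essentially the same route as the paper: effectiveness of $a+\kappa$, $b+\kappa$, $c+\kappa$ gives membership in $\Theta$, Fay's formula combined with $2\kappa\equiv p+q+r+s+2D$ gives the trisecant, distinctness of $p,q,r,s$ for generic $K_0\in\mathcal{B}_3^*$ gives distinctness of $a,b,c$, and your explicit verification that $\mathcal{G}(a)$ is the hyperplane spanned by $\varphi(p+s+D)$, hence $K_0$, spells out a step the paper leaves implicit. One remark: the ``delicate point'' you flag at the end is not actually needed --- distinctness follows immediately because $a-b=\mathcal{O}_C(s-r)$ is a nontrivial class whenever $r\neq s$ on a curve of positive genus (no $h^0=1$ or smoothness input required), and smoothness of $a,b,c$ need not be established at all since the statement asserts the value of the Gauss map only ``if defined'', so the sketchy dimension-count/semicontinuity paragraph can simply be dropped.
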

\begin{proof}
To show that $a$ lies on the theta divisor, we have to prove that $a+\kappa$ is effective, and this follows from the definition; similarly, also $b$ and $c$ lie on the theta divisor.

As explained in Theorem \ref{thm:Faytrisecant}, to prove that they are collinear, we have to show that $2a$ is linearly equivalent to $p-q-r+s$, $2b$ to $p-q+r-s$, $2c$ to $p+q-r-s$, $a+b$ to $p-q$ and $a+c$ to $p-r$. This is fine because $2D-2\kappa$ is linearly equivalent to $-(p+q+r+s)$.

As shown in Lemma \ref{lem:exis}, for a generic geometric point $K_0$ of $\mathcal{B}^*_3$ the points $p$, $q$, $r$ and $s$ are distinct, hence also $a$, $b$ and $c$ are distinct.

\end{proof}

If the point $K_0$ is chosen generically in $\mathcal{B}^*_2$, we obtain a degenerate trisecant, so a line which is tangent to the theta divisor and it furthermore intersects it at another distinct point. We do not know if the theta divisor has a tangent of order three. 

For genus three non-hyperelliptic curves, the theta divisor is smooth and the Gauss map is finite of degree 6. Generically over the locus $\mathcal{B}_3^*$ we have the points $a,b,c$ and $-a,-b,-c$.

Let us discuss the example of genus four curves, we refer to \cite[Page 232]{ACGH} for the basic facts. We take a generic curve, so that the theta divisor of the Jacobian has just two singular points, and they are not of order two. For dimensional reasons, the generic trisecant provided by Theorem \ref{thm:exist} intersects the theta divisor in three distinct smooth points. On the other hand, we can take a canonical divisor $K_0=p+q+r+s+2Q$ such that the points are all distinct and $p+q+Q$ is a $\mathfrak{g}^1_3$; to see that such a divisor exists, start off from a generic $\mathfrak{g}^1_3$, say $p+q+Q$, in the canonical model these three points lie on a line, take now a plane $\Pi$ containing this line and tangent to the canonical curve at $Q$, then such a plane cuts out the requested canonical divisor. For this divisor, the associated trisecant intersects the theta divisor at two smooth points and one singular point. 

We do not know if there exists a trisecant intersecting the theta divisor at three distinct singular points.

\end{section}

\begin{section}{Ramification of the Gauss map and trisecants}\label{ram}
 
This section is devoted to the proof of the following theorem.
\begin{theorem}\label{thm:proof}
Let $a$, $b$ and $c$ be three points on the theta divisor whose images in $\mathbb{P}^{2^g-1}$ lie on a trisecant. Moreover, assume that $a$ and $b$ are smooth points of the theta divisor. Then
$$
\G(a)=\G(b)=K_0=p+q+r+s+2(P_1+\cdots P_{g-3})
$$
In particular, this is a point of $\mathcal{B}_{3}$. Morever, the points $a$ and $b$ have the highest multiplicity in the fibre of $\G$ over $K_0$; if the points $P_i$ are distinct, this multiplicity is $2^{g-3}$.

\end{theorem}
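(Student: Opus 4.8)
The plan is to combine Corollary~\ref{c1}, the fact that every trisecant of $K(J(C))$ is a Fay trisecant, and a degeneration argument inside the canonical linear system $|K|$. Corollary~\ref{c1}(3) already gives $\G(a)=\G(b)$; write $H\in(\mathbb{P}^{g-1})^{*}$ for this common hyperplane and $K_0:=\varphi^{*}H$ for the associated effective canonical divisor, so $K\sim K_0\sim 2\kappa$. Since every trisecant line of the Kummer variety of a Jacobian is a Fay trisecant (the converse of Theorem~\ref{thm:Faytrisecant}; see \cite{G}, \cite[Page 267]{ACGH}, \cite[Section IIIb]{M}), and since replacing any of $a,b,c$ by its negative changes neither the Kummer image, nor membership in $\Theta$ or $\Theta^{sm}$ (the singular locus of $\Theta$ is symmetric), nor — because $\nabla\theta$ is odd, so that $\G(-x)=\G(x)$ as points of $\mathbb{P}^{g-1}$ — the Gauss image, and since permuting $\{q,r,s\}$ in Fay's formula permutes $\{a,b,c\}$, after these harmless adjustments I may assume there are points $p,q,r,s\in C$ with
\[
2a\sim p-q-r+s,\qquad a+b\sim p-q,\qquad a+c\sim p-r,
\]
and hence $b+c\sim p-s$ and $a-b\sim s-r$.

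To show $K_0\in\mathcal{B}_3$: since $a,b\in\Theta$ the classes $a+\kappa$ and $b+\kappa$ are effective of degree $g-1$; let $E_a,E_b$ be their (unique, as $a,b$ are smooth) effective representatives. As $\varphi(E_a),\varphi(E_b)\subset H$ we have $E_a,E_b\le K_0$, and $F_a:=K_0-E_a$, $F_b:=K_0-E_b$ are the effective representatives of $\kappa-a$ and $\kappa-b$ (unique, as $-a,-b$ are smooth); in particular $K_0=E_a+F_a=E_b+F_b$. Writing $D:=\gcd(E_a,F_a)$, of degree $\delta$, I get $E_a=D+E_a^{\circ}$, $F_a=D+F_a^{\circ}$ with $\gcd(E_a^{\circ},F_a^{\circ})=0$ and $K_0=2D+E_a^{\circ}+F_a^{\circ}$. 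Since $E_a^{\circ}-F_a^{\circ}=E_a-F_a\sim 2a\sim p-q-r+s$, the effective divisors $E_a^{\circ}+q+r$ and $F_a^{\circ}+p+s$ (of degree $g+1-\delta$) are linearly equivalent. The decisive tool is the identity $|K+x|=|K|+x$ for $x\in C$ (valid because $h^0(K+x)=h^0(K)=g$): feeding it the linear equivalences $E_a+E_b\sim K+p-q$, $E_a+F_b\sim K+s-r$, and the like forces one of $p,q,r,s$ into the support of one of $E_a,E_b,F_a,F_b$, and combined with geometric Riemann--Roch (from $h^0(E_a^{\circ})=1$, the points $\varphi(E_a^{\circ})$ span a $(\deg E_a^{\circ}-1)$-plane) it pins the divisors down. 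In the generic situation $h^0(E_a^{\circ}+q+r)=1$, whence $E_a^{\circ}+q+r=F_a^{\circ}+p+s$ as divisors; since $\gcd(E_a^{\circ},F_a^{\circ})=0$ this forces $E_a^{\circ}$ to be supported on $\{p,s\}$ and $F_a^{\circ}$ on $\{q,r\}$, so (the four points being distinct) $E_a^{\circ}=p+s$, $F_a^{\circ}=q+r$, $\delta=g-3$, and
\[
K_0=p+q+r+s+2D\in\mathcal{B}_3,\qquad D\sim a+\kappa-p-s;
\]
the statement for $b$ then follows from $a+b\sim p-q$ and $h^0(b+\kappa)=1$, giving $E_b=D+p+r$, and likewise $\G(c)=K_0$ when $c$ is smooth. \emph{The main obstacle is handling the non-generic alternatives}: when $h^0(E_a^{\circ}+q+r)\ge 2$ (so that divisor moves in a pencil) and when some of $p,q,r,s$ coincide with one another or with points of $D$ (so $D$ becomes non-reduced). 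Here one must exploit the remaining Fay relations and the span conditions above to check that the conclusion $K_0\in\mathcal{B}_3$, in the stated form, still holds — noting that it is robust because $\mathcal{B}_1\subseteq\mathcal{B}_2\subseteq\mathcal{B}_3$.

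For the multiplicity: the Gauss map $\G\colon\Theta^{sm}\to\mathbb{P}^{g-1}$ has finite fibres (a fibre over $H$ being the set of effective divisors $E$ of degree $g-1$ with $E\le\varphi^{*}H$ and $h^0(E)=1$), and since $\Theta^{sm}$ is smooth and the target regular of the same dimension, $\G$ is flat along $\Theta^{sm}$ (miracle flatness); by conservation of number, $\operatorname{mult}_E\G^{-1}(K_0)$ equals the number of sub-divisors of a nearby general canonical divisor $\varphi^{*}H'$ that specialise to $E$. Assume first that $D=P_1+\dots+P_{g-3}$ with all $g+1$ points of $K_0$ distinct. Degenerating $H'\to H$ inside $|K|$ splits each double point $2P_i$ of $K_0$ into two simple points, so a specialising sub-divisor is obtained by choosing, for each $P_i$ occurring in $E$ with multiplicity exactly $1$, which of the two pieces to keep; thus $\operatorname{mult}_E\G^{-1}(K_0)=2^{k(E)}$, where $k(E)$ is the number of such $P_i$. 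Since $\deg E=g-1$ forces $k(E)\le g-3$, with equality exactly when $E=D+x+y$ for two of the points $x,y\in\{p,q,r,s\}$, the maximal multiplicity in the fibre is $2^{g-3}$; and $E_a=D+p+s$, $E_b=D+p+r$ from the previous step are of this extremal form, so $a$ and $b$ attain it. When the $P_i$ are not all distinct, the same degeneration (replacing each multiple point of $K_0$ by the appropriate number of simple points) shows $a$ and $b$ still attain the highest multiplicity occurring in the fibre — intuitively because $E_a$ and $E_b$ take at every point of $K_0$ a multiplicity as close as possible to half of that of $K_0$, which maximises the relevant product of binomial coefficients — although this multiplicity is then no longer $2^{g-3}$.
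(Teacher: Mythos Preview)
Your overall strategy matches the paper's: invoke Corollary~\ref{c1} for $\G(a)=\G(b)$, use the trisecant conjecture (Theorem~\ref{thm:trisecant_conj}) to put the line in Fay form, manipulate the effective degree-$(g-1)$ divisors representing $\pm a+\kappa$, $\pm b+\kappa$ inside $K_0$, and compute multiplicities by flatness plus a degeneration in $|K|$. The multiplicity paragraph is essentially Proposition~\ref{prop:count_mult} and is fine.

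The real gap is in your argument that $K_0\in\mathcal{B}_3$. Passing to $D=\gcd(E_a,F_a)$ is the wrong move: you have no a~priori control on $\delta=\deg D$, so $E_a^{\circ}+q+r$ has degree $g+1-\delta$, and there is no reason for $h^0(E_a^{\circ}+q+r)=1$. In fact your ``generic'' hypothesis is essentially equivalent to the conclusion $\delta=g-3$: if $\delta$ were small, $E_a^{\circ}+q+r$ would have degree close to $g$ and $h^0\ge 2$ automatically. So the ``non-generic alternative'' you flag is not a boundary case to be mopped up afterwards---it is the whole content of the claim, and your sketch (invoking $|K+x|=|K|+x$ and geometric Riemann--Roch without carrying anything out) does not address it.

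The paper avoids this circularity by never taking a gcd. It keeps the four divisors $D_1,E_1,D_2,E_2$ (your $E_a,F_a,E_b,F_b$), each with $l=1$ by smoothness of $a,b$, and proves a short lemma (Lemma~\ref{lem:trick}): if $K_0=A_1+B_1=A_2+B_2$ with all four effective of degree $g-1$ and $l=1$, and $p_1-p_2\equiv A_1-A_2$, then $p_1\in\operatorname{Supp}A_1$, $p_2\in\operatorname{Supp}A_2$. Feeding in the Fay relations $a-b\equiv s-r$ and $a+b\equiv p-q$ places $p,s\in D_1$, $p,r\in D_2$, $q,r\in E_1$, $q,s\in E_2$. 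Now $D_1-s$ and $D_2-r$ are effective of degree $g-2$, linearly equivalent, with $h^0=1$ (since $h^0(D_i)=1$), hence \emph{equal as divisors}; the same trick on the $E_i$ and on $D_1-p\equiv E_2-q$ gives $K_0=p+q+r+s+2(P_1+\cdots+P_{g-3})$ with no case split. The rigidity $l=1$ that makes this work lives on the $D_i,E_i$ themselves and is destroyed when you strip off the gcd.
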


\begin{corollary} If $a=b$, i.e the secant is a tangent  then   $\G(a)=2D+p+2q +s$  Similarly If the three points  coincide, then $\G(a)=2D+p+3q $ 
\end{corollary}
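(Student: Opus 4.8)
The plan is to read off both identities from the explicit parametrisation of the three trisecant points produced in Theorem \ref{thm:exist}; by Theorem \ref{thm:proof} this parametrisation captures every trisecant of $\Theta$ whose points $a,b$ are smooth, so nothing is lost by working with it. Recall that for $K_0=p+q+r+s+2D\in\mathcal{B}_3$ one has
$$
a=\mathcal{O}_C(p+s+D-\kappa),\qquad b=\mathcal{O}_C(p+r+D-\kappa),\qquad c=\mathcal{O}_C(p+q+D-\kappa),
$$
and, by the same theorems, $\G$ takes the constant value $K_0$ at whichever of $a,b,c$ is a smooth point of $\Theta$. Thus the whole problem reduces to understanding what the collision of two, respectively three, of these points imposes on the base data.

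First I would translate the equality $a=b$ into a condition on the curve. It says $\mathcal{O}_C(p+s+D)\cong\mathcal{O}_C(p+r+D)$, i.e. $\mathcal{O}_C(s)\cong\mathcal{O}_C(r)$, i.e. the points $r$ and $s$ are linearly equivalent; since distinct points of a curve of positive genus are never linearly equivalent, this forces $r=s$. (The divisor $p+D$ common to $a$, $b$ and $c$ plays no role; the collision only involves the two ``free'' points attached to $a$ and $b$.) Feeding $r=s$ into $K_0=p+q+r+s+2D$ gives $\G(a)=K_0=p+q+2r+2D$, which is the asserted $2D+p+2q+s$ after renaming the base points so that the doubled one is called $q$. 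Geometrically this is precisely the degenerate trisecant discussed after Theorem \ref{thm:exist}: a line tangent to $\Theta$ at $a=b$ and meeting it again at $c$, and correspondingly $K_0$ now lies in $\mathcal{B}_2$.

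Next, if all three points coincide, then in addition $b=c$, which in the same way reads $\mathcal{O}_C(r)\cong\mathcal{O}_C(q)$, hence $q=r$; together with $r=s$ this gives $q=r=s$, so $\G(a)=K_0=p+3q+2D$, a canonical divisor having a point of multiplicity three at $q$ (equivalently, a hyperplane with a triple contact with the canonical curve at $q$), exactly as claimed.

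I expect no real obstacle in the computation; the two points that deserve a word are (i) the elementary input that linearly equivalent points on a curve of positive genus are equal --- this is what converts a collision of points of $J(C)$ into a collision of points of $C$ --- and (ii) the tacit assumption that after the degeneration $a$ (hence $b$) is still a smooth point of $\Theta$, so that $\G(a)$ is defined; this holds for $K_0$ generic in $\mathcal{B}_2^*$, respectively in the (finite) locus of canonical divisors with a triple point, and where it fails the statement is vacuous.
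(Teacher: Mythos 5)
Your proposal is correct and is essentially the argument the paper intends (the corollary is stated without proof as an immediate specialization of Theorems \ref{thm:exist} and \ref{thm:proof}): the collision $a=b$ forces two of the four points $p,q,r,s$ to become linearly equivalent, hence equal on a curve of positive genus, and the triple collision forces three of them to coincide, so $K_0$ acquires a double, respectively triple, point. Your observation that the corollary's exact formula $2D+p+2q+s$ only matches after relabeling which of the four points is doubled is accurate, since with the labeling of Theorem \ref{thm:exist} the case $a=b$ gives $r=s$ and hence $K_0=2D+p+q+2s$; this is a harmless notational discrepancy in the paper's statement, not a gap in your argument.
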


Recall that the equality $\G(a)=\G(b)$ has been proved in Corollary \ref{c1}. To prove our result, we also need to know that all trisecants are obtained out of Fay's formula, see Theorem \ref{thm:Faytrisecant}; this is the trisecant conjecture, which we now recall, cf. \cite{G}, \cite{ad}, \cite{sh}, \cite{W} and \cite{kr}.

\begin{theorem}[Trisecant conjecture]\label{thm:trisecant_conj}
Let $a$, $b$ and $c$ be three points on the Kummer variety $K(A)$ which lie on a trisecant. Then the abelian variety $A$ is a Jacobian of a curve $C$, and there exist four points $p$,$q$, $r$ and $s$ on $C$, such that $a\in 2^{-1}\mathcal{O}_C(p-q-r+s)$, $b\in 2^{-1}\mathcal{O}_C(p-q+r-s)$ and $c\in 2^{-1}\mathcal{O}_C(p+q-r-s)$, with $a+b=\mathcal{O}_C(p-q)$ and $a+c=\mathcal{O}_C(p-r)$.
\end{theorem}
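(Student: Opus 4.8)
The statement is the difficult implication of Welters' trisecant conjecture: a single proper trisecant of the Kummer variety forces $A$ to be a Jacobian, together with the explicit reconstruction of the four points. The plan is to follow Krichever's integrable-systems strategy. First I would translate the purely geometric collinearity hypothesis into an analytic functional equation. Exactly as in the derivation carried out in the proof of Proposition \ref{p1} (which only uses the addition formula, not membership in $\Theta$), writing $s_x(z)=\theta(z+x)\theta(z-x)$, the collinearity of $\Km(a),\Km(b),\Km(c)$ is equivalent to a linear dependence $s_a=\beta s_b+\gamma s_c$ with $\beta,\gamma\neq 0$, that is, to the three-term bilinear identity
\[
\theta(z+a)\theta(z-a)=\beta\,\theta(z+b)\theta(z-b)+\gamma\,\theta(z+c)\theta(z-c)\qquad\text{for all }z.
\]
After a translation of $z$ and a relabelling of the shifts by three vectors $U,V,W$ of $A$, this is precisely a Fay-type (Hirota) bilinear relation, the discrete analogue of the KP equation.

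Second, from this identity I would build a Baker--Akhiezer-type wave function. The three-term relation is exactly the compatibility condition guaranteeing the existence of a formal meromorphic solution $\psi(z,t)$ of a linear difference-differential equation whose poles are controlled by the theta divisor: the shift directions $U,V,W$ generate commuting flows, and the single algebraic trisecant condition is upgraded, by differentiation and iteration of the identity, to a full hierarchy of such relations. This is where the rigidity of theta functions is exploited, since the identity must propagate consistently along the orbit of the flows.

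Third, and this is the crux, I would carry out the \emph{integration} step: proving that the formal wave function is genuinely meromorphic on $A$ with the expected divisor of poles, so that it is the Baker--Akhiezer function of an honest spectral curve $C$, with $A\cong J(C)$. Controlling that no spurious poles appear, and that the pole locus is exactly a translate of $\Theta$, is where the analytic heart of the argument lies; it parallels Shiota's resolution of Novikov's conjecture and is by far the main obstacle. Once $C$ is reconstructed, the three shift directions are forced to be chords of the Abel--Jacobi image of $C$, and matching them to differences of four points $p,q,r,s\in C$ yields the explicit conditions $a\in 2^{-1}\mathcal{O}_C(p-q-r+s)$, $b\in 2^{-1}\mathcal{O}_C(p-q+r-s)$, $c\in 2^{-1}\mathcal{O}_C(p+q-r-s)$ together with $a+b=\mathcal{O}_C(p-q)$ and $a+c=\mathcal{O}_C(p-r)$.

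Finally, I would recall that the distinct-points case treated here is one of three equivalent forms of Welters' conjecture, the others being a tangency-plus-secant degeneration and an inflectional tangent. In practice it is cleanest to argue in parallel with, or degenerate to, the flex form, which is most directly tied to the KP equation; I expect the uniform treatment of these degenerations, and especially the global analyticity estimate of the previous paragraph, to absorb essentially all of the difficulty, the remainder being the bookkeeping that produces the four points.
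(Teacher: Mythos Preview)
The paper does not give its own proof of this theorem: it is stated and immediately attributed to the literature, with references to \cite{G}, \cite{ad}, \cite{sh}, \cite{W} and \cite{kr}. So there is nothing in the paper to compare your proposal against; the authors use the trisecant conjecture as a black box.

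That said, your outline is a fair high-level sketch of Krichever's argument in \cite{kr}, which is the reference that actually proves the single-trisecant version stated here. The translation of collinearity into a three-term bilinear theta identity, the construction of a wave function satisfying a difference equation governed by the trisecant shifts, and the identification of the global meromorphy/pole-control step as the decisive difficulty are all accurate. Two caveats. First, your description of the ``integration step'' glosses over what is genuinely hard: one must rule out that the residues along the theta divisor of certain auxiliary functions assemble into a nontrivial $\bar\partial$-closed current supported on $\Theta$, and this is where Krichever's argument differs essentially from, and goes beyond, Shiota's KP proof; saying it ``parallels Shiota'' undersells the new ideas needed for the discrete (trisecant) case. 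Second, the final reconstruction of $p,q,r,s$ is not mere bookkeeping once $C$ is known: one has to identify the shift vectors with specific Abel--Jacobi images, which in Krichever's framework comes from the analytic properties of the Baker--Akhiezer function near the marked points, not from an a posteriori matching. As a roadmap your proposal is sound; as a proof it would need the full machinery of \cite{kr}.
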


Let us fix some notations. Given two divisors $A$ and $B$ on the curve, we write $A\equiv B$ if they are linearly equivalent, $A=B$ if they are equal as divisors, and we will write $l(A):=\dim H^0(C,\mathcal{O}_C(A))$. To start with, let us prove the following preliminary lemmas:
\begin{lemma} \label{effe}
For any  effective $D$ with $l(D)=1$ and for every $P\in C$ 
$$ l(D)>l(D-P)  \iff  P \notin  {\rm Supp} (D)$$    
\end{lemma}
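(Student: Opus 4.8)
The plan is to reduce everything to the standard elementary estimate that for any divisor $D$ on $C$ and any point $P\in C$ one has
$$
l(D)-1\le l(D-P)\le l(D).
$$
Under the hypothesis $l(D)=1$ this says that $l(D)>l(D-P)$ holds if and only if $l(D-P)=0$, so the lemma is equivalent to the assertion that $l(D-P)=0$ precisely when $P\notin\mathrm{Supp}(D)$. The two implications are then handled separately, each in a couple of lines.

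First I would prove the contrapositive of the ``only if'' direction: if $P\in\mathrm{Supp}(D)$, then $D-P$ is still effective, so the constant function $1$ is a nonzero global section of $\mathcal{O}_C(D-P)$ and $l(D-P)\ge 1$; combined with $l(D-P)\le l(D)=1$ this forces $l(D-P)=1=l(D)$, i.e. $l(D)\not>l(D-P)$.

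For the ``if'' direction, suppose $P\notin\mathrm{Supp}(D)$ and, for contradiction, that $l(D-P)\ge 1$. Since $D-P\le D$ as divisors, the inclusion of invertible sheaves $\mathcal{O}_C(D-P)\hookrightarrow\mathcal{O}_C(D)$ induces an inclusion on global sections, so any nonzero $f\in H^0(C,\mathcal{O}_C(D-P))$ is also a nonzero section of $\mathcal{O}_C(D)$. Here I would invoke the hypothesis $l(D)=1$: because $D$ is effective, the constant section $1$ spans $H^0(C,\mathcal{O}_C(D))$, hence $f$ is a constant. But a constant lies in $H^0(C,\mathcal{O}_C(D-P))$ only when $D-P\ge 0$, i.e. $P\in\mathrm{Supp}(D)$, contradicting our assumption. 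Therefore $l(D-P)=0<1=l(D)$.

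I do not expect a genuine obstacle; the statement is elementary. The one point that deserves care is the use, in the second step, of the fact that $l(D)=1$ together with $D$ effective forces $D$ to be the unique effective divisor in its linear equivalence class (equivalently $H^0(C,\mathcal{O}_C(D))=\C\cdot 1$) — this is exactly where the hypothesis $l(D)=1$ enters, and without it the equivalence in the lemma can fail.
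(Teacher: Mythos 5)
Your proof is correct and takes essentially the same route as the paper: both arguments come down to the observation that $l(D)=1$ with $D$ effective forces $H^0(C,\mathcal{O}_C(D))=\C\cdot 1$, i.e.\ $D$ is the unique effective divisor in its class, so $l(D-P)=1$ happens exactly when $P\in\mathrm{Supp}(D)$. The paper compresses this into one line about the effective divisors of $|D|$ and $|D-P|$ coinciding; your version just spells it out at the level of sections.
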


\begin{proof}
$|D|=|D-P|$ if and only if  effective divisors coincide, if and only if $ P \in  {\rm Supp} (D)$.
\end{proof}
\begin{lemma}\label{lem:trick}
Let $K_0$ be a canonical divisor. Suppose we can write
$$K_0=A_1+B_1=A_2+B_2\,,$$
where $A_i$ and $B_i$ are effective, of degree $g-1$, and are not special (a divisor $D$ is not special if $l(K_0-D)=0$).  Let $p_1$ and $p_2$ be two points on the curve such that
$$p_1-p_2\equiv A_1-A_2$$
then $p_1$ is in the support of $A_1$ and $B_2$, and $p_2$ is in the support of $A_2$ and $B_1$.

\end{lemma}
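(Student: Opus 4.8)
The plan is to unwind the hypotheses into two symmetric linear equivalences and then run one Riemann--Roch computation (twice, with the roles of the $A_i$ and $B_i$ swapped). First I would record the content of the non-speciality assumption: for an effective divisor of degree $g-1$ it forces $l(A_i)=l(B_i)=1$, so each of $A_1,B_1,A_2,B_2$ is the \emph{unique} effective divisor in its linear equivalence class. I may also assume $p_1\neq p_2$, since otherwise $p_1-p_2\equiv A_1-A_2$ gives $A_1\equiv A_2$, hence $A_1=A_2$ and $B_1=B_2$, and there is nothing to prove. Now the hypothesis $p_1-p_2\equiv A_1-A_2$ rewrites as $A_1+p_2\equiv A_2+p_1$ (two effective divisors of degree $g$), and since $B_i=K_0-A_i$ as divisors it also rewrites as $B_1+p_1\equiv B_2+p_2$. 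The key observation is that the statement is symmetric under $A_i\leftrightarrow B_i$, $p_1\leftrightarrow p_2$: if I can show $p_2\in\mathrm{Supp}(B_1)$ and $p_1\in\mathrm{Supp}(B_2)$, then feeding the companion data $(B_1,A_1,B_2,A_2,p_2,p_1)$ — which satisfies the same hypotheses because $p_2-p_1\equiv B_1-B_2$ — into the identical argument yields $p_1\in\mathrm{Supp}(A_1)$ and $p_2\in\mathrm{Supp}(A_2)$.

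The central step is a Riemann--Roch computation. Applying it to the degree-$g$ divisor $A_1+p_2$ and using $K_0-A_1=B_1$ gives $l(A_1+p_2)=1+l(B_1-p_2)$, and likewise $l(A_2+p_1)=1+l(B_2-p_1)$. Since $A_1+p_2\equiv A_2+p_1$, the left-hand sides coincide, so $l(B_1-p_2)=l(B_2-p_1)$. By Lemma~\ref{effe} applied to $D=B_1$ (effective, $l(B_1)=1$), we have $l(B_1-p_2)\in\{0,1\}$, equal to $1$ precisely when $p_2\in\mathrm{Supp}(B_1)$, and similarly for $B_2$ and $p_1$. Hence it suffices to rule out the possibility $l(B_1-p_2)=l(B_2-p_1)=0$, i.e.\ the case $l(A_1+p_2)=1$.

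So suppose $l(A_1+p_2)=1$. Then $A_1+p_2$ is the only effective divisor in its class, and as $A_2+p_1$ is effective and linearly equivalent to it, we get $A_1+p_2=A_2+p_1$ as divisors. Since $p_1\neq p_2$, comparing coefficients forces $p_1\in\mathrm{Supp}(A_1)$, say $A_1=p_1+A'$, and then $A_2=A'+p_2$. Substituting into the divisor equalities $K_0=A_1+B_1=A_2+B_2$ gives $p_1+B_1=p_2+B_2$ (both equal $K_0-A'$), and comparing coefficients again (using $p_1\neq p_2$) yields $p_2\in\mathrm{Supp}(B_1)$; but then Lemma~\ref{effe} gives $l(B_1-p_2)=l(B_1)=1$, contradicting $l(B_1-p_2)=0$. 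Therefore $p_2\in\mathrm{Supp}(B_1)$ and $p_1\in\mathrm{Supp}(B_2)$, and the symmetric application described above completes the proof. The only genuine subtlety is this degenerate case where the linear system $|A_1+p_2|$ is a single point — outside it the membership statements come for free from the Riemann--Roch identity; everything else is bookkeeping with Lemma~\ref{effe} and the fact that non-speciality is exactly what makes the relevant complete linear systems consist of one divisor.
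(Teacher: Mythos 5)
Your proof is correct and follows essentially the same route as the paper's: Riemann--Roch applied to a degree-$g$ divisor, Lemma~\ref{effe}, the fact that an effective divisor with $l=1$ is the unique effective member of its class, and the $A_i\leftrightarrow B_i$, $p_1\leftrightarrow p_2$ symmetry --- you merely organize the dichotomy around $A_1+p_2\equiv A_2+p_1$ where the paper argues by contradiction on $B_1+p_1\equiv B_2+p_2$, and your final divisor-equality contradiction is if anything cleaner than the paper's terse concluding step. One small caveat: when $p_1=p_2$ it is not accurate that ``there is nothing to prove'' (the conclusion $p_1\in\mathrm{Supp}(A_1)$ can genuinely fail in that degenerate situation), so, exactly like the paper's own proof, your argument implicitly assumes the two decompositions --- equivalently the two points --- are distinct, which is what holds in the application of the lemma.
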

\begin{proof}
 Obviously  we  have also   $p_2-p_1\equiv B_1-B_2$. We first show that $p_1$ is in the support of $A_1$ if and only if $p_2$ is in the support of $A_2$. Since
$$A_1-p_1=K_0-B_1-p_1 \equiv K_0-B_2-p_2= A_2-p_2$$
applying Lemma \ref{effe}, we have the conclusion.

We now show that $p_1$ is in the support of $A_1$. Arguing by contradiction, thanks to Riemann Roch, we have that $l(B_1+p_1)=l(B_2+p_2)=1$. This shows that
 $B_1+p_1=B_2+p_2$, hence we can write
$$ B_1= P_1+\cdots + P_{g-2}+p_2 \quad \textrm{and} \quad B_2=  P_1+\cdots  + P_{g-2}+p_1\,.$$ 
Thus  $p_1$ and $p_2$ are in the support of $K_0$, hence $p_1\in B_1$ and e $p_2\in B_2 $
This  gives  $B_1=B_2$ that is a contradiction.The other statement is obtained by symmetry.
\end{proof}

We can now prove the central part of our claim.

\begin{proposition}\label{centralprop}
Let $a,b\in \Theta^{sm}$ and $c\in\Theta$ as in Theorem \ref{thm:trisecant_conj}, then
$$
\mathcal{G}(a)=p+q+r+s+2(P_1+\cdots P_{g-3})\,,
$$
where $P_i$ are points on the curve. 
\end{proposition}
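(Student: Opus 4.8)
The plan is to realise $\mathcal{G}(a)$ as the canonical divisor $\varphi^{-1}(\mathcal{G}(a))$ it cuts out on the canonical curve, to produce the four points via the trisecant conjecture (Theorem~\ref{thm:trisecant_conj}), and then to use $\mathcal{G}(a)=\mathcal{G}(b)$ (Corollary~\ref{c1}) together with Lemma~\ref{lem:trick} to force this divisor into the shape $p+q+r+s+2(P_1+\cdots+P_{g-3})$.

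\textbf{Set-up.} Write $2\kappa$ also for the canonical class. For a smooth point $x\in\Theta^{sm}$ the class $x+\kappa$ is effective with $h^0(x+\kappa)=1$ (Riemann's singularity theorem, cf.\ \cite[Section 4.4]{BL}); let $E_x$ denote its unique effective representative. Put $K_0:=\mathcal{G}(a)$, regarded as a canonical divisor, and set $F_x:=K_0-E_x$; by Riemann--Roch $F_x$ is effective of degree $g-1$ with $h^0(F_x)=1$ and $F_x\equiv\kappa-x$. The geometric description of the Gauss map recalled in the Introduction gives $\varphi^{-1}(\mathcal{G}(x))=E_x+F_x$ for every $x\in\Theta^{sm}$.

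\textbf{Main argument.} By Theorem~\ref{thm:trisecant_conj} there exist $p,q,r,s\in C$ with $2a\equiv p-q-r+s$ and $a+b\equiv p-q$, hence also $a-b\equiv s-r$; we may assume the three points of the trisecant are distinct on the Kummer variety, so that $p\neq q$ and $r\neq s$. Corollary~\ref{c1}(3) gives $\mathcal{G}(a)=\mathcal{G}(b)=K_0$, so $E_a+F_a=E_b+F_b=K_0$ as divisors, and $E_a,F_a,E_b,F_b$ all have $h^0=1$ because $a,b\in\Theta^{sm}$. Applying Lemma~\ref{lem:trick} to this common decomposition of $K_0$, first with $(A_1,B_1,A_2,B_2)=(E_a,F_a,E_b,F_b)$ and $(p_1,p_2)=(s,r)$ — admissible since $E_a-E_b\equiv a-b\equiv s-r$ — yields $s\in\mathrm{Supp}\,E_a$ and $r\in\mathrm{Supp}\,F_a$; then with $(A_1,B_1,A_2,B_2)=(E_a,F_a,F_b,E_b)$ and $(p_1,p_2)=(p,q)$ — admissible since $E_a-F_b\equiv a+b\equiv p-q$ — yields $p\in\mathrm{Supp}\,E_a$ and $q\in\mathrm{Supp}\,F_a$. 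Hence $E_a=p+s+E_0$ and $F_a=q+r+F_0$ with $E_0,F_0$ effective of degree $g-3$; since $h^0(E_a)=h^0(F_a)=1$ we get $h^0(E_0)=h^0(F_0)=1$, and $K_0=p+q+r+s+E_0+F_0$. Finally, $2E_a\equiv 2a+2\kappa\equiv(p-q-r+s)+K_0$, so substituting $E_a=p+s+E_0$ gives $2E_0\equiv K_0-p-q-r-s=E_0+F_0$, whence $E_0\equiv F_0$; as $h^0(E_0)=1$ and $F_0$ is effective in the class of $E_0$, we conclude $F_0=E_0$. Therefore $\mathcal{G}(a)=K_0=p+q+r+s+2E_0$, which is the assertion with $P_1+\cdots+P_{g-3}=E_0$.

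\textbf{Main obstacle.} The delicate step is upgrading the linear equivalence $K_0-p-q-r-s\equiv 2E_0$ to an equality of divisors: a priori the residual $K_0-p-q-r-s$ only lies in the image of multiplication by $2$ on $\mathrm{Pic}^{g-3}(C)$. This is exactly where smoothness of $a$ and $b$ is indispensable, since it forces $h^0$ of every residual divisor occurring in the argument to equal $1$, so each relevant class has a unique effective representative and Lemma~\ref{lem:trick} applies. The remaining points — verifying the hypotheses of Lemma~\ref{lem:trick} and handling degenerate configurations such as coincidences among $p,q,r,s$ or $a\equiv\pm b$ (which either contradict distinctness of the Kummer points or are absorbed by allowing repetitions in $p+q+r+s+2E_0$) — are routine.
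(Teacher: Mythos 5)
Your proof is correct and follows essentially the same route as the paper: write $a=E_a-\kappa$, $b=E_b-\kappa$ with $l(E_a)=l(E_b)=1$, use Corollary \ref{c1} to get the common decomposition $K_0=E_a+F_a=E_b+F_b$, and apply Lemma \ref{lem:trick} to $a-b\equiv s-r$ and $a+b\equiv p-q$ to place $p,q,r,s$ in the supports, upgrading linear equivalences to divisor equalities via $l=1$. The only (cosmetic) difference is the last step: you deduce $E_0\equiv F_0$ from $2a\equiv p-q-r+s$ and conclude $E_0=F_0$ by uniqueness of effective representatives, whereas the paper determines $D_1,D_2,E_1,E_2$ explicitly and matches the residual points via $D_1-p=E_2-q$.
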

\begin{proof}

The assumption $a,b\in \Theta^{sm}$ means that
$$a=D_1-\kappa,\,b=D_2-\kappa $$
with $D_i$ effective and $l( D_i)=1$, and $\kappa$ the theta characteristic associated to $\Theta$.

Because of Corollary \ref{c1}, the points $a,b$  have the same image via  Gauss map; this means that the divisors $D_i$  determine a unique  canonical  divisor $K_0$ with 
$$ K_0= D_i+E_i\, ,$$ 
$D_i, E_i$ effective and $l(D_i)=l(E_i)=1$.

Look now at the difference $a-b=s-r\equiv D_1-D_2\equiv E_2-E_1$. Remember we are using the same notation as Theorem \ref{thm:trisecant_conj}. Lemma \ref{lem:trick} shows that $r$ is in the support of $E_1$ and $D_2$, and $s$ is in the support of $E_2$ and $D_1$.
 Applying the Lemma \ref{lem:trick} also to     $a+b=p-q \equiv D_1-E_2\equiv D_2-E_1$   we can show that $r, q$ are in the support of $E_1$, $s, q$ are in the support of $E_2$,  $p,s$ is in the support of $D_1$, $p,r$ is in the support of $D_2$  

Now look again at the difference  $a-b= s-r\equiv D_1-D_2$, and write it as $D_1-s\equiv D_2-r$. Since $s$ in the support of $D_1$ and $r$ is the support of $D_2$, Lemma \ref{effe} implies that the equality is an equality of divisors, so we can write
 $$D_1=  P_1+\dots+ P_{g-3}+p+s, \, D_2=P_1+\dots+ P_{g-3}+p+r \,.$$
 Keep on playing this trick we get that
$$E_1=  Q_1+\dots+ Q_{g-3}+q+r, \, E_2=Q_1+\dots+ Q_{g-3}+q+s \,.$$

 Using $a+b= p-q \equiv D_1-E_2$ we get that
 $D_1-p=E_2-q$, hence $$P_i=Q_i, \, i=1,\dots (g-3).$$
In particular
 $$ K_0=2( P_1+\dots+ P_{g-3})+p+r+q+s$$
as required.

\end{proof}

The last part of Theorem \ref{thm:proof} follows from the next proposition.

\begin{proposition}\label{prop:count_mult}
Let $K_0= n_1 P_1+\dots +n_kP_{k}$ be a canonical divisor, and let $D= l_1 P_1+\dots +l_kP_{k}$ be a degree $g-1$   divisor such that  l(D)=1  and $K_0>D$; then the multiplicity of $D-\kappa$ in the fibre $\G^{-1}(K_0)$ is
$$m=\binom{n_1}{l_1} \cdots  \binom{n_k}{l_k}$$  

\end{proposition}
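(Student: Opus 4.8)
The plan is to understand the fibre $\G^{-1}(K_0)$ set-theoretically and scheme-theoretically in terms of the subdivisors of $K_0$ of degree $g-1$, and then to identify the local multiplicity of the Gauss map at a point $D-\kappa$ with a product of binomial coefficients counting the ways of selecting such a subdivisor ``through'' $D$. First I would recall the basic description: for a smooth point $D'-\kappa\in\Theta^{sm}$ (with $D'$ effective of degree $g-1$, $l(D')=1$), the value $\G(D'-\kappa)$ is the unique hyperplane $H_{D'}\in|K|$ whose pullback $\varphi^*H_{D'}=K_0$ satisfies $K_0\geq D'$, i.e.\ $K_0=D'+E'$ with $E'$ effective of degree $g-1$. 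Hence set-theoretically the fibre $\G^{-1}(K_0)$ consists of all points $D'-\kappa$ with $D'$ an effective degree-$(g-1)$ subdivisor of the \emph{fixed} divisor $K_0$; when $K_0=\sum n_iP_i$, these $D'$ are exactly the $\sum l'_iP_i$ with $0\le l'_i\le n_i$ and $\sum l'_i=g-1$, subject to $l(D')=1$. So the reduced fibre is a finite set, and the content of the proposition is purely about multiplicities.

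The key step is a local computation at the point $D-\kappa$. Here I would use the standard identification of a neighbourhood of $D-\kappa$ in $\Theta$ with (an open subset of) the symmetric product $C_{g-1}$, via $E'\mapsto E'-\kappa$, valid because $l(D)=1$ so $D$ is a smooth point of $C_{g-1}$ mapping birationally to a smooth point of $\Theta$. Under this identification the Gauss map becomes the map $C_{g-1}\dashrightarrow |K|^*$ sending $E'$ to the hyperplane spanned by $\varphi(E')$, i.e.\ the classical ``secant/span'' map. Its fibre over $H_{K_0}$ is the scheme of effective degree-$(g-1)$ subdivisors of $K_0$, which near $D$ decomposes as a product over the distinct support points: locally $C_{g-1}\cong \prod_i C_{l_i}$ near $D=\sum l_iP_i$ (with a trivial smooth factor for points of $\varphi(C)$ not in $\mathrm{Supp}(D)$), and the condition ``$E'\le K_0$'' becomes, in the $i$-th factor, ``the degree-$l_i$ divisor near $l_iP_i$ is supported on the length-$n_i$ scheme $n_iP_i$''. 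The local length of that factor is the number (with multiplicity) of ways a degree-$l_i$ divisor can degenerate inside the length-$n_i$ fat point, which is $\binom{n_i}{l_i}$; this is a one-variable computation I would carry out explicitly using a local coordinate $t$ on $C$ at $P_i$ and the elementary symmetric functions as coordinates on $C_{l_i}$, the scheme being cut out by the vanishing of the appropriate symmetric functions. Multiplying over $i$ gives $m=\prod_i\binom{n_i}{l_i}$.

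The main obstacle I anticipate is making the scheme-theoretic fibre statement rigorous: I must verify that the Gauss map, transported to $C_{g-1}$, really does cut out the subscheme ``$E'\le K_0$'' with its natural scheme structure (rather than some thickening or a smaller scheme), and that the product decomposition of the fibre is scheme-theoretic and not merely set-theoretic. Concretely this requires checking that the canonical map $\varphi$ is an immersion along $\mathrm{Supp}(K_0)$ — true for non-hyperelliptic $C$, and in the hyperelliptic case the Gauss map factors through the $2{:}1$ cover and the statement should be adjusted or the hypotheses interpreted accordingly — and that the differential computation localises correctly at each $P_i$. I would handle this by writing the span condition as the vanishing of a single linear form $\ell_{K_0}$ evaluated on $\varphi(E')$ is impossible since $E'$ has degree $g-1<2g-2$; rather, the fibre is where the hyperplane spanned by $\varphi(E')$ equals $H_{K_0}$, equivalently where $\varphi(E')$ is contained in $H_{K_0}$, i.e.\ where the $g-1$ linear functionals ``$\varphi(-)\in H_{K_0}$'' all vanish on $E'$ — a complete-intersection-type condition of the expected codimension $g-1$ inside $C_{g-1}$, whose local length at $D$ is computed by the binomial product above. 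Once that identification is in place, the rest is the elementary symmetric-function bookkeeping, and the count $\sum_i l_i = g-1$, $\sum_i n_i = 2g-2$ is exactly what makes the codimensions match.
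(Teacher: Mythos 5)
Your plan is sound and arrives at the right count, but it follows a genuinely different route from the paper. The paper argues globally by flatness and degeneration: since the Gauss map has finite fibres and $\Theta^{sm}$ is Cohen--Macaulay, $\G$ is flat (miracle flatness); one then picks a disc $\Delta\subset\mathbb{P}^{g-1}$ through $K_0$ whose generic point is a reduced canonical divisor $Q_1+\cdots+Q_{2g-2}$, and by flatness the multiplicity at $D-\kappa$ equals the number of reduced points of the nearby fibre specializing to $D-\kappa$, i.e.\ the number of ways to choose $l_i$ of the $n_i$ points $Q_j$ tending to $P_i$, giving $\prod_i\binom{n_i}{l_i}$. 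You instead compute the scheme-theoretic fibre locally at $D-\kappa$: identify $\Theta$ near $D-\kappa$ with $C_{g-1}$ near $D$ (legitimate since $l(D)=1$), identify the fibre with the ``divisibility scheme'' $\{E'\le K_0\}$, split it as a product over the points in the support of $D$, and compute each one-point contribution as $\binom{n_i}{l_i}$. Both steps you flag as delicate do work, and can be handled more cleanly than you suggest: phrasing the condition intrinsically ($E'\le K_0$ iff the section $s_{K_0}$ cutting out $K_0$ vanishes on the subscheme $E'$) removes any need for $\varphi$ to be an immersion, so the hyperelliptic case needs no separate treatment; and the equality of ideals (not merely of supports) between the fibre of $\G$ and the divisibility scheme follows because, in a basis $s_{K_0},s_1,\dots,s_{g-1}$ of $H^0(C,K)$, the affine coordinates of $\G(E')$ near the point $K_0$ are obtained from the remainder of $s_{K_0}$ modulo $E'$ by a matrix of regular functions that is invertible at $E'=D$ (using $h^0(K-D)=1$). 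The remaining crux --- that the local length of $\{f\ \text{monic of degree}\ l:\ f\mid t^{n}\}$ inside $C_l$ is $\binom{n}{l}$ --- is true but is not mere symmetric-function bookkeeping; the cleanest argument is that the multiplication map $C_l\times C_{n-l}\to C_n$ is finite flat of degree $\binom{n}{l}$ and its fibre over $nP$ projects isomorphically onto your scheme, which is the paper's degeneration idea in miniature. In exchange for this extra local work, your approach gives an explicit local model of the fibre and does not invoke the global flatness of the Gauss map; the paper's approach is shorter, avoids all local algebra, and never needs to analyze the scheme structure of the fibre directly.
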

\begin{proof} The divisor $D-\kappa$ represents a smooth point of the theta divisor with $\G(D)=K_0$. 

On a Jacobian, all fibers of the Gauss map are finite; the domain of the Gauss map is the smooth locus of $\Theta$, which, being by construction smooth, is Cohen-Macauly; we conclude that the Gauss map is flat (see for instance \cite[Exercise III.10.9]{Har}).

Let $\Delta$ be the spectrum of a DVR (or, if one prefers to work in the analytic category, a small disc), and take an embedding $\iota\colon \Delta \hookrightarrow \mathbb{P}^{g-1}$ such that the closed point $0$ maps to $K_0$, and the generic point $\eta$ maps to a reduced divisor $Q_1+\cdots +Q_{2g-2}$. We label the divisors $Q_i$ so that the divisor $\sum_{i=1}^{n_1}Q_i$ specializes to $n_1P_1$, the divisor $\sum_{i=n_1+1}^{n_2}Q_i$ specializes to $n_2P_2$, and so on.

Let $X$ be the irreducible component of $\mathcal{G}^{-1}(\iota(\Delta))$ containing $D-\kappa$. The base change $\mathcal{G}\colon X\to \Delta$ of the Gauss map is again flat, because flatness is preserved by base change. The fibre over $0$ is supported on $D-\kappa$, whereas the fibre over $\eta$ consists of reduced points. To compute the requested multiplicity it is, by flatness, enough to compute the number of points in the fibre over $\eta$. Every element in this fibre specializes to $D-\kappa$, hence to determine one of them we have to choose $l_1$ points in $\{Q_1,\dots,Q_{n_1}\}$, $l_2$ points in $\{Q_{n_1+1},\dots , Q_{n_1+n_2}\}$, and so on. 
\end{proof}

\end{section}

\begin{section}{Ramification of the Gauss map and multisecants}\label{multi}

In this section, we show that the stratification of branch locus of the Gauss map introduced in  Section \ref{branch} can be used to construct multisecants. To this end, we need a generalization of Fay's trisecant formula due to Gunning, see \cite{G2} and \cite[Section 7]{Grus}.

\begin{theorem}[Gunning multisecant formula]\label{Gunn}
Let $C$ be a smooth projective curve, let $p_1,\ldots,p_\ell,q_1,\ldots,q_{\ell-2}\in C$ be different points, and take line bundles
$$a_j\in\frac{1}{2}\mathcal{O}_C\left(2p_j+\sum_{i=1}^{\ell-2}q_i-\sum_{i=1}^\ell p_i\right)$$
for $j=1,\ldots,\ell$, such that $a_j+a_k=\mathcal{O}_C(p_j+p_k+\sum_{i=1}^{\ell-2}q_i-\sum_{i=1}^{\ell}p_i)$ for all $j,k$. Then the images of $a_1,\ldots,a_\ell$ in the Kummer variety lie on an $(l-2)$-plane. 
\end{theorem}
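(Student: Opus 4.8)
The plan is to turn the statement into a linear dependence among sections of $2\Theta$ on $A$, run the translation from the proof of Proposition \ref{p1} \emph{backwards}, and then produce that dependence by induction on $\ell$, with Fay's formula (Theorem \ref{thm:Faytrisecant}) as the base case.

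\emph{Reduction to a theta identity.} First I would normalise the data. The prescriptions $2a_j=\mathcal O_C\big(2p_j+\sum_i q_i-\sum_i p_i\big)$ together with the relations $a_j+a_k=\mathcal O_C\big(p_j+p_k+\sum_i q_i-\sum_i p_i\big)$ force $a_j=\mathcal O_C(p_j)\otimes M^{-1}$, where $M:=\mathcal O_C(p_1)\otimes a_1^{-1}$ is a square root of $L:=\mathcal O_C\big(\sum_i p_i-\sum_i q_i\big)$; this is a one-line check using only the two families of hypotheses. Distinct admissible choices of $(a_1,\dots,a_\ell)$ correspond to distinct square roots of $L$, hence to a simultaneous translation of all the $a_j$ by a fixed $2$-torsion point, which acts on $\mathbb P H^0(A,2\Theta)^*$ by a projective automorphism and so is harmless. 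Writing $s_x(z)=\theta(z+x)\theta(z-x)$ and invoking the equivalence in the proof of Proposition \ref{p1} --- linear relations among the $\Km(a_j)$ correspond, via the Addition Formula, to linear relations among the $s_{a_j}$ --- the theorem becomes: there exist $\beta_1,\dots,\beta_\ell$, not all zero, with $\sum_{j=1}^\ell\beta_j s_{a_j}=0$ in $H^0(A,2\Theta)$. Letting $\vartheta$ denote the Riemann theta function on $\mathrm{Pic}^{g-1}(C)$ with zero divisor $W_{g-1}$, fixing a symmetric theta characteristic $\kappa$, and substituting $\eta=z-[M]+\kappa\in\mathrm{Pic}^{g-2}(C)$, this is the Gunning multisecant identity
$$\sum_{j=1}^\ell\beta_j\,\vartheta(\eta+p_j)\,\vartheta\Big(\eta+\sum_{i\ne j}p_i-\sum_i q_i\Big)=0 .$$
Since $[p_j]+\big(\sum_{i\ne j}p_i-\sum_i q_i\big)=L$ is independent of $j$, the theorem of the square shows that all $\ell$ summands are sections of one fixed line bundle on $\mathrm{Pic}^{g-2}(C)$, of type $2\Theta$, so the identity is at least well posed.

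\emph{Producing the relation.} I would prove the displayed identity by induction on $\ell$. For $\ell=3$ the data are four points $p_1,p_2,p_3,q_1$ and three bundles $a_1,a_2,a_3$, and matching the constraints shows this is exactly Theorem \ref{thm:Faytrisecant} under the relabelling $(p,q,r,s)=(q_1,p_3,p_2,p_1)$, $(a,b,c)=(a_1,a_2,a_3)$; the $\beta_j$ are then the explicit ratios of prime forms coming out of Fay's identity. For the inductive step I would specialise $p_\ell$ to $q_{\ell-2}$: then $L$, hence $M$, and hence $a_1,\dots,a_{\ell-1}$, coincide with the corresponding data for the configuration $p_1,\dots,p_{\ell-1},q_1,\dots,q_{\ell-3}$, so by induction $\Km(a_1),\dots,\Km(a_{\ell-1})$ lie on an $(\ell-3)$-plane and $\Km(a_1),\dots,\Km(a_\ell)$ lie on an $(\ell-2)$-plane over that divisor; the same holds over all of its relabellings, and over the divisors $\{p_j=p_k\}$, where $a_j=a_k$ and two of the $\ell$ Kummer points collide. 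To upgrade this from these codimension-one loci to the generic locus of distinct points one must compare the divisor class of the relevant $\ell\times\ell$ minors of the evaluation matrix (as sections of an explicit line bundle on the space of configurations together with the choice of $M$) against the sum of these degeneration divisors and show the latter is big enough to force the minors to vanish identically; alternatively one writes the $\beta_j$ as the evident products of prime forms extending the $(\ell-1)$-case and verifies the identity directly, which is the computation of \cite{G2} and \cite[Section 7]{Grus}.

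\emph{The main obstacle} is precisely this last step. The reduction to a $2\Theta$-identity and the base case are routine, but converting the codimension-one degeneration data into the full identity requires either an honest degree/divisibility bookkeeping on the configuration space or the explicit prime-form manipulation of Gunning, and this is where essentially all of the real work lies. Two minor points, already addressed above, are that the $a_j$ are only canonically defined up to a common $2$-torsion translate, and that the hypothesis ``different points'' should enter only to guarantee that the prime forms appearing in the coefficients do not vanish, the identity itself persisting by continuity on a larger locus.
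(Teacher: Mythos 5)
The paper does not prove this statement at all: Theorem \ref{Gunn} is quoted as a known result of Gunning, with the proof delegated to \cite{G2} and \cite[Section 7]{Grus}. So the relevant question is whether your sketch would constitute an independent proof, and it does not. Your reduction is sound as far as it goes: the normalisation $a_j=\mathcal{O}_C(p_j)\otimes M^{-1}$ with $M^2=L$ follows from the hypotheses, the translation of collinearity of the $\Km(a_j)$ into a linear relation $\sum_j\beta_j s_{a_j}=0$ via the Addition Formula is the same mechanism used (in the other direction) in the proof of Proposition \ref{p1}, and your identification of the case $\ell=3$ with Theorem \ref{thm:Faytrisecant} under the relabelling $(p,q,r,s)=(q_1,p_3,p_2,p_1)$ checks out.

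The genuine gap is the inductive step, and you name it yourself. Knowing that the $\ell\times\ell$ minors of the evaluation matrix vanish on the specialisation divisors $\{p_\ell=q_{\ell-2}\}$ (and its relabellings) and on the diagonals $\{p_j=p_k\}$ does not force them to vanish identically; for that you must exhibit the minors as sections of a concrete line bundle on the configuration space (together with the \'etale cover parametrising the square root $M$), compute its class, and show that the sum of the degeneration divisors, with the correct multiplicities, is at least as large. You give no construction of this bundle, no computation of either class, and no argument that the inequality goes in the needed direction --- and it is not a formality, since a priori the minors could vanish on those divisors while being nonzero generically. The fallback you offer, namely writing the $\beta_j$ as explicit products of prime forms and verifying the identity directly, is exactly the computation of \cite{G2}, i.e.\ the result being quoted. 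As written, your proposal reduces Gunning's theorem either to an unproven degree estimate or to Gunning's own proof, so it cannot stand as a proof of the statement.
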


In particular, this gives a $2\ell-2$ dimensional family of linear subvarieties of dimension $\ell-2$ that intersect the Kummer variety in at least $\ell$ points. Note that the condition on $a_j+a_k$ assures us that we are dividing by $2$ in a uniform way; in other words, this is equivalent to dividing by two on the universal cover of the Jacobian. 

We are now going to use the loci $\mathcal{B}_{\ell}$ defined in Section \ref{branch} to construct $\ell-2$ dimensional multisecants of the theta divisor.

\begin{theorem}[Existence of multisecants of the theta divisor]\label{thm:multi} Let 
$$K_0=\sum_{i=1}^{2\ell-2}P_i+2\sum_{j=1}^{g-\ell}Q_j$$
be a generic point of $\mathcal{B}_{\ell}^*$, so that the points $P_i$ are distinct, and let
$$\Sigma=\{p_1,\ldots,p_\ell\}\cup\{q_1,\ldots,q_{\ell-2}\}$$ 
be a partition of $\{P_1,\ldots,P_{2\ell-2}\}$. Define
$$D_j^\Sigma:=p_j+\sum_{i=1}^{\ell-2}q_i+\sum_{j=1}^{g-\ell}Q_j\hspace{1cm}\mbox{and}\hspace{1cm}a_j^\Sigma:=D_j^{\Sigma}-\kappa$$
for $j=1,\ldots,\ell$. Then $a_j^\Sigma\in\Theta$ for all $j$ and the images of $a_1^\Sigma,\ldots,a_\ell^\Sigma$ in the Kummer variety lie on an $(\ell-2)$-plane. 

Moreover, the Gauss map is constant on $\mathcal{S}:=\bigcup_{\Sigma}\{a_1^\Sigma,\ldots,a_\ell^\Sigma\}\cap\Theta^{sm}$, and $\mathcal{S}$ consists of precisely the elements of the fiber of the Gauss map of highest multiplicity.
\end{theorem}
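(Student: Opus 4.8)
The plan is to reduce Theorem \ref{thm:multi} to the tools already developed for trisecants, applied one partition $\Sigma$ at a time, together with the multiplicity count of Proposition \ref{prop:count_mult}. First I would verify the elementary claims: that each $D_j^\Sigma$ is effective of degree $g-1$ and satisfies $l(D_j^\Sigma)=1$ for generic $K_0\in\mathcal{B}_\ell^*$ (so that $a_j^\Sigma=D_j^\Sigma-\kappa$ genuinely lies on $\Theta$), and that $2D_j^\Sigma-2\kappa\equiv 2p_j+\sum_{i=1}^{\ell-2}q_i-\sum_{i=1}^{\ell}P_i$ together with the pairwise relation $a_j^\Sigma+a_k^\Sigma\equiv p_j+p_k+\sum q_i-\sum P_i$; these are immediate from $K_0\equiv 2\kappa$ and the definitions, exactly as in the proof of Theorem \ref{thm:exist}. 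Gunning's formula (Theorem \ref{Gunn}) then gives that $\Km(a_1^\Sigma),\dots,\Km(a_\ell^\Sigma)$ span an $(\ell-2)$-plane. The constancy of $\G$ on each such tuple follows from Proposition \ref{p1} only gives a bound on the span of the Gauss images, so for the exact statement $\G(a_j^\Sigma)=K_0$ I would instead argue directly: if $a_j^\Sigma\in\Theta^{sm}$, then $D_j^\Sigma$ is the unique effective divisor in its class, and $\G(a_j^\Sigma)=K_0$ precisely because $K_0-D_j^\Sigma = p_j'+\sum q_i' + \sum Q_j$ (the complementary divisor, using that $K_0>D_j^\Sigma$) is effective; so $\G(a_j^\Sigma)=K_0$ for every $j$ and every $\Sigma$, which is the asserted constancy.

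Next I would identify the set $\mathcal{S}=\bigcup_\Sigma\{a_1^\Sigma,\dots,a_\ell^\Sigma\}\cap\Theta^{sm}$ explicitly inside the fiber $\G^{-1}(K_0)$. Every point of $\G^{-1}(K_0)\cap\Theta^{sm}$ is of the form $D-\kappa$ for $D$ effective, degree $g-1$, $l(D)=1$, with $K_0>D$ (since the complementary divisor must be effective). Writing $D=\sum l_i P_i + \sum m_j Q_j$ with $0\le l_i\le 1$ and $0\le m_j\le 2$, Proposition \ref{prop:count_mult} gives the multiplicity of $D-\kappa$ as $\prod\binom{1}{l_i}\prod\binom{2}{m_j}=2^{\#\{j: m_j=1\}}$. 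This is maximized, equal to $2^{g-\ell}$, exactly when every $m_j=1$, i.e. when $D$ contains each $Q_j$ with multiplicity exactly one; and then $D$ must contain exactly $\ell-1$ of the $2\ell-2$ points $P_i$ with multiplicity one (to reach degree $g-1$). Choosing which $\ell-1$ of the $P_i$ lie in $D$ is the same as choosing the point $p_j$ and the $q_i$'s in a partition $\Sigma$: indeed $D_j^\Sigma=p_j+\sum_{i=1}^{\ell-2}q_i+\sum Q_j$ contains exactly the $\ell-1$ points $\{p_j\}\cup\{q_1,\dots,q_{\ell-2}\}$ among the $P_i$. Hence the $D-\kappa$ of top multiplicity are precisely the $a_j^\Sigma$, which shows $\mathcal{S}$ equals the set of highest-multiplicity elements of the fiber.

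The one point requiring care is the genericity hypothesis: I must know that for generic $K_0\in\mathcal{B}_\ell^*$ the points $P_i$ are distinct (this is built into the definition of $\mathcal{B}_\ell^*$), that $l(D_j^\Sigma)=1$ (so $a_j^\Sigma$ is a well-defined point, not a positive-dimensional linear system), and that the $Q_j$ may be assumed distinct from the $P_i$ and from each other, so that the binomial count $2^{\#\{j:m_j=1\}}$ is literally correct rather than a coincidence of merging points. For $l(D_j^\Sigma)=1$ I would note that $D_j^\Sigma-\kappa\in\Theta$, and if $l(D_j^\Sigma)\ge 2$ then $D_j^\Sigma-\kappa$ would be a singular point of $\Theta$ by Riemann–Kempf; a dimension count on the incidence correspondence $\{(K_0,D)\}$ over $\mathcal{B}_\ell^*$ shows this is non-generic, so a generic $K_0$ works. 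I expect this genericity bookkeeping — ensuring the fiber behaves as expected and the multiplicities are not inflated by accidental collisions — to be the main obstacle; the geometric heart of the argument (Gunning's formula plus the flatness/binomial count of Proposition \ref{prop:count_mult}) is already in place.

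Finally, assembling these pieces: constancy of $\G$ on $\mathcal{S}$ is the statement $\G(a_j^\Sigma)=K_0$ proved above; the multisecant statement is Gunning's formula; and the identification of $\mathcal{S}$ with the top-multiplicity locus of $\G^{-1}(K_0)$ is the binomial computation. I would present these in that order, flagging at the start the genericity assumptions on $K_0\in\mathcal{B}_\ell^*$ so the later steps run cleanly.
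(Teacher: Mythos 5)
Your proposal is correct and follows essentially the same route as the paper: membership in $\Theta$ by construction, Gunning's formula (Theorem \ref{Gunn}) for the $(\ell-2)$-plane, constancy of $\G$ via $D_j^\Sigma\leq K_0$ together with the fact that $\varphi(D_j^\Sigma)$ spans a hyperplane at a smooth point, and the identification of $\mathcal{S}$ with the top-multiplicity points of the fiber via the binomial count of Proposition \ref{prop:count_mult}. Your additional bookkeeping (checking Gunning's hypotheses from $K_0\equiv 2\kappa$ and the genericity ensuring distinctness of the $Q_j$ so the multiplicity $2^{g-\ell}$ is not inflated) is sound and indeed slightly more careful than the paper's own write-up, which leaves these points implicit.
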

\begin{proof}
It is clear that $a_j^\Sigma\in\Theta$ for all $j$ by construction, and by Theorem \ref{Gunn} the images of the $a_j^\Sigma$ lie on an $\ell-2$ dimensional linear variety. 

Now take $a_j^\Sigma\in \mathcal{S}$. Then $D_j^\Sigma$ spans a unique hyperplane in $\mathbb{P}^{g-1}$. Note moreover that $D_j^\Sigma\leq K_0$ and so $\mbox{span}(K_0)$ is this hyperplane, which is independent of $a_j^\Sigma$. 

Note that in $\bigcup_{\Sigma}\{a_1^\Sigma,\ldots,a_\ell^\Sigma\}$ there are precisely $\binom{2\ell-2}{\ell-1}$ elements. Moreover, thanks to Proposition \ref{prop:count_mult}, the highest multiplicity over $K_0$ is $2^{g-\ell}$ and there are at most $\binom{2\ell-2}{\ell-1}$ points that give this multiplicity. The highest multiplicity of a point on the fiber over an element of $\mathcal{B}_\ell$ is $2^{g-\ell}$, and the points that have this multiplicity are of the form
$$\sum_{i=1}^{g-\ell}Q_i+\sum_{i=1}^{\ell-1}P_{j_i}$$
for certain $j_i\leq 2\ell-2$, and these are exactly the multisecant points we are looking at.
\end{proof}

The multisecants constructed in Theorem \ref{thm:multi} should be quite special, the reason is twofold. First, in view of Proposition \ref{p1}, we do not expect that the Gauss map to be constant on the intersection of a high dimensional multisecant of the theta divisor and the smooth part of the theta divisor. Secondly, we do not know if the Gunning multisecant formula \ref{Gunn} describes all multisecants of a Jacobian. For this particular multisecants, we are to prove the following generalization of Theorem \ref{thm:proof}.

\begin{proposition}
Let $a_1,\ldots,a_\ell$ be the points from Theorem \ref{Gunn}, assume that they all lie on the smooth locus of $\Theta$ and on the same fiber of the Gauss map. Then $\G(a_i)\in\mathcal{B}_\ell$.
\end{proposition}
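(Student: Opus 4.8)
The plan is to mimic the proof of Proposition \ref{centralprop}, generalizing the combinatorial bookkeeping from $\ell=3$ (three points $a,b,c$, two curve points $r,s$ appearing in the differences) to arbitrary $\ell$. First I would use the hypothesis $a_i\in\Theta^{sm}$ to write $a_i=D_i-\kappa$ with $D_i$ effective of degree $g-1$ and $l(D_i)=1$; since all the $a_i$ lie on the same fiber of the Gauss map, Corollary \ref{c1} (or rather its proof via Proposition \ref{p1}) shows they determine a single canonical divisor $K_0$, and there are effective divisors $E_i$ with $K_0=D_i+E_i$ and $l(D_i)=l(E_i)=1$. By Theorem \ref{thm:trisecant_conj}'s multisecant analogue — which here we simply take to be the hypothesis that the $a_i$ arise from Gunning's formula, so I would apply it with the role of the trisecant conjecture replaced by the assumption stated in the proposition — we have explicit expressions: $a_j+a_k\equiv \mathcal{O}_C(p_j+p_k+\sum q_i-\sum p_i)$, and $2a_j\equiv \mathcal{O}_C(2p_j+\sum q_i-\sum p_i)$.

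The key step is then to exploit the many linear equivalences among differences $a_j-a_k$ and sums $a_j+a_k$ together with Lemma \ref{lem:trick} and Lemma \ref{effe}, exactly as in Proposition \ref{centralprop}. Concretely, $a_j-a_k\equiv p_j-p_k\equiv D_j-D_k\equiv E_k-E_j$, so Lemma \ref{lem:trick} forces $p_j\in\mathrm{Supp}(D_j)\cap\mathrm{Supp}(E_k)$ and $p_k\in\mathrm{Supp}(D_k)\cap\mathrm{Supp}(E_j)$ for every pair $j\neq k$; similarly $a_j+a_k\equiv p_j+p_k+\sum q_i-\sum p_i$ compared against $D_j-E_k$ (after noting $K_0-D_j-E_k$ has the right degree) pins down the $q_i$ in the supports. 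Running over all pairs, each $p_j$ must lie in $\mathrm{Supp}(D_j)$ (with multiplicity accounted for by how many relations it enters) and each $q_i$ in $\mathrm{Supp}(D_j)$ for all $j$; using Lemma \ref{effe} to upgrade the linear equivalences $D_j-p_j\equiv D_k-p_k$ to actual equalities of divisors, one extracts a common part. The upshot should be
$$
D_j = p_j + \sum_{i=1}^{\ell-2} q_i + \sum_{i=1}^{g-\ell} Q_i, \qquad E_j = (\text{something symmetric}) ,
$$
so that
$$
K_0 = D_j + E_j = \sum_{i=1}^{\ell} p_i + \sum_{i=1}^{\ell-2} q_i + (\text{terms from } E_j) .
$$
Counting degrees ($2(\ell-1) + (\ell-2) = 3\ell-4$ is too big on its own, so the $E_j$ contribution must absorb the discrepancy as doubled points), one arrives at $K_0 = \sum_{i=1}^{2\ell-2}P_i + 2\sum_{j=1}^{g-\ell}Q_j$ with the $P_i$ comprising the $p_j$ and $q_i$, which is precisely membership in $\mathcal{B}_\ell$.

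I expect the main obstacle to be the combinatorics of which points land in which supports and with what multiplicity: with $\ell$ points there are $\binom{\ell}{2}$ difference relations and $\binom{\ell}{2}$ sum relations, and one must check that applying Lemma \ref{lem:trick} consistently across all of them yields a single common effective divisor $Q_1+\cdots+Q_{g-\ell}$ shared by all $D_j$ (rather than $j$-dependent tails), and that the residual points assemble into exactly the shape $\sum P_i + 2\sum Q_j$. A secondary subtlety is justifying that the non-speciality hypothesis of Lemma \ref{lem:trick} holds: this follows from $l(D_j)=1$ and Riemann–Roch since $\deg D_j = g-1$, but one must be careful when passing to divisors like $D_j - p_j$ or $E_j + q_i$ that the degree and speciality conditions needed to re-apply the lemmas remain valid. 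Once the structure of $K_0$ is established, the conclusion $\G(a_i)=K_0\in\mathcal{B}_\ell$ is immediate, since $\G(a_i)$ is by definition the canonical divisor containing $D_i$, which is $K_0$.
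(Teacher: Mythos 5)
Your proposal is correct and follows essentially the same route as the paper's own proof: writing $a_i=D_i-\kappa$ with $l(D_i)=1$, extracting the common canonical divisor $K_0=D_i+E_i$ from the Gauss-fiber hypothesis, applying Lemma \ref{lem:trick} to the differences $a_i-a_j\equiv p_i-p_j$ and then using $l(D_i)=1$ (Lemma \ref{effe}) to upgrade $D_i-p_i\equiv D_j-p_j$ to divisor equalities, and finally using the sums $a_i+a_j\equiv D_i-E_j$ to identify the $q_k$'s and show the residual effective part $F$ is the same for all $j$, giving $K_0=\sum p_i+\sum q_i+2F\in\mathcal{B}_\ell$. The only blemish is the garbled degree count $2(\ell-1)+(\ell-2)$ in your aside (the correct bookkeeping is $\deg(\sum p_i+\sum q_i)=2\ell-2$ and $\deg 2F=2(g-\ell)$), which does not affect the argument.
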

\begin{proof}
The proof is similar to the proof of Proposition \ref{centralprop}. Since each $a_i$ lies on the smooth locus of $\Theta$, then 
$$a_i=D_i-\kappa$$
for some effective divisor $D_i$ with $l(D_i)=1$. This implies that there is an effective canonical divisor $K_0$ such that
$$K_0=D_i+E_i$$
for all $i$ (note that this is equality of divisors, not linear equivalence). In particular, we see that
$$a_i-a_j\equiv p_i-p_j\equiv D_i-D_j\equiv E_j-E_i$$
for all $i,j$. By Lemma \ref{lem:trick}, we have that $p_i$ is in the support of $D_i$ and $E_j$ for all $i\neq j$. Now since $D_i-p_i$ is effective, $l(D_i-p_i)=1$, and
$$D_i-p_i\equiv D_j-p_j,$$
we have that $D_i+p_j=D_j+p_i$ for all $i,j$ (note again, this is equality of divisors). Therefore, there exist points $P_1,\ldots,P_{g-2}\in C$ such that
$$D_i=P_1+\cdots+P_{g-2}+p_i$$
$$D_j=P_1+\cdots+P_{g-2}+p_j$$
for all $i,j$. Note as well by the previous discussion that $E_j$ contains each $p_i$ for $i\neq j$, and so 
$$E_j=p_1+\cdots+p_{j-1}+p_{j+1}+\cdots+p_\ell+F_j$$
for some effective divisor $F_j$. Now we see that
$$a_i+a_j\equiv D_i+D_j-K_0=D_i-E_j.$$
On the other hand,
$$a_i+a_j=\sum_{k=1}^{\ell-2}q_k-\sum_{\stackrel{k=1}{k\neq i,j}}^{\ell}p_k,$$
and so
$$D_i\equiv \sum_{k=1}^{\ell-2}q_k+E_j-\sum_{\stackrel{k=1}{k\neq i,j}}^\ell p_k.$$
Now since $l(D_i)=1$ and the right hand side is an effective divisor, we have equality of divisors, and so
$$D_i=\sum_{k=1}^{\ell-2}q_k+p_i+F_j.$$
In other words, we see that $F_j$ is a divisor that is independent of $j$, so we will call it $F$. We conclude by writing
\begin{eqnarray}
\nonumber K_0=D_i+E_i&=&p_1+\cdots+p_{i-1}+p_{i+1}+\cdots+p_\ell+F+\sum_{k=1}^{\ell-2}q_k+F+p_i\\
\nonumber &=&\sum_{i=1}^{\ell}p_i+\sum_{i=1}^{\ell-2}q_i+2F\in\mathcal{B}_\ell.\end{eqnarray}

\end{proof}

\end{section}

\begin{section}{Remarks about the linear system $\Gamma_{00}$ and the Gauss map}\label{Gamma}

In this section, we shall consider a partial converse of the situation described in Proposition \ref{p1}: we analyze the linear span of points in the fiber of the Gauss map inside $\Gamma(A, 2\Theta)$.

Let $(A,\Theta)$ be a principally polarized abelian variety. The space $\Gamma_{00}$ is a distinguished subspace of $\Gamma(A, 2\Theta)$, it consists of sections whose vanishing order at the origin is at least $4$. If $(A, \Theta)$ is indecomponsable, the dimension of $\Gamma_{00}$ is equal to $2^g-g(g+1)/2-1$, cf \cite{GG}. A  basis for these  spaces is  described in  \cite{gsm}. We  want to relate the fiber of the Gauss map with  this space. It is a well known fact that if $x\in  {\rm Sing}(\Theta)$, then $s_x(z)\in \Gamma_{00}$.

\begin{lemma}
 Let $x_1,\ldots,x_r\in\Theta^{sm}$ be points such that $\Km(x_1),\ldots,\Km(x_r)$ are different and  $\mathcal{G}(x_1)=\ldots=\mathcal{G}(x_r)$,  then there exist  constants
$\lambda_2,\cdots, \lambda_r$ such that
 $$s_{x_1}(z)-\lambda_j  s_{x_j}(z)\in\Gamma_{00}.$$ 
\end{lemma}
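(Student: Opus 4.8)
The plan is to exploit the Taylor expansion of $s_x(z)=\theta(z-x)\theta(z+x)$ at the origin and to match the first few terms for a suitable linear combination. Recall that $\theta$ is even, so $s_x$ is automatically even, and hence its odd-order Taylor coefficients at $0$ vanish; thus $s_x \in \Gamma_{00}$ if and only if its value at $0$ and its Hessian at $0$ both vanish (the order-$1$ and order-$3$ terms are already zero by parity). For $x \in \Theta^{sm}$ we have $\theta(x)=0$ but $\nabla_x\theta \neq 0$, so $s_x(0)=\theta(-x)\theta(x)=0$ automatically, which kills the order-$0$ term. The only obstruction to $s_x$ lying in $\Gamma_{00}$ is therefore the order-$2$ term, i.e.\ the Hessian $\partial^2 s_x/\partial z_i\partial z_j(0)$. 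As computed in the proof of Proposition \ref{p1}, for $x\in\Theta$ this Hessian equals $2\,\nabla_x\theta\,(\nabla_x\theta)^t$, a rank-one symmetric matrix determined, up to scalar, exactly by the point $\mathcal{G}(x)=[\nabla_x\theta]\in\mathbb{P}^{g-1}$.

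\textbf{Key steps.}
First I would record that $s_{x_j}(0)=0$ for each $j$ because $x_j\in\Theta^{sm}\subset\Theta$, so every $s_{x_j}$ already has vanishing order $\geq 1$ at $0$, hence (by parity) order $\geq 2$. Second, I would write down the order-$2$ Taylor term: the Hessian of $s_{x_j}$ at $0$ is $2\,(\nabla_{x_j}\theta)(\nabla_{x_j}\theta)^t$. Third, I would use the hypothesis $\mathcal{G}(x_1)=\cdots=\mathcal{G}(x_r)$: this says the vectors $\nabla_{x_1}\theta,\dots,\nabla_{x_r}\theta$ are all proportional to a common nonzero vector $v\in T_0A^*$, say $\nabla_{x_j}\theta=\mu_j v$ with $\mu_j\neq 0$. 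Then the Hessian of $s_{x_j}$ at $0$ is $2\mu_j^2\, v v^t$. Fourth, I would set $\lambda_j := \mu_1^2/\mu_j^2$ (equivalently, the ratio of the two rank-one Hessians), so that $s_{x_1}-\lambda_j s_{x_j}$ has vanishing value and vanishing Hessian at $0$; by parity its order-$1$ and order-$3$ terms vanish too, so its vanishing order at $0$ is at least $4$, i.e.\ $s_{x_1}-\lambda_j s_{x_j}\in\Gamma_{00}$. I would phrase the choice of $\lambda_j$ intrinsically as the constant for which the (rank one) quadratic parts of $s_{x_1}$ and $\lambda_j s_{x_j}$ agree, so as not to worry about a choice of basis.

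\textbf{Main obstacle.}
There is no serious obstacle; the content is entirely the identity $\partial^2 s_x/\partial z_i\partial z_j(0)=2\,\partial_i\theta(x)\,\partial_j\theta(x)$ for $x\in\Theta$, which is already established in the proof of Proposition \ref{p1}, combined with the parity of $\theta$. The only point requiring a line of care is confirming that the vanishing order of an even function with zero value and zero Hessian at the origin is genuinely $\geq 4$ (the degree-$3$ part vanishes by oddness), and that $\Gamma_{00}$ is exactly the subspace cut out by the vanishing of the order-$0$ and order-$2$ parts at $0$ — i.e.\ that the order-$1$ and order-$3$ conditions are vacuous on the even subspace, which is where all sections $s_x$ live. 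One should also note $\mu_j\neq 0$ precisely because $x_j$ is a smooth point of $\Theta$, so the $\lambda_j$ are well defined and nonzero.
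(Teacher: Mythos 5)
Your argument is correct and is essentially the paper's own proof: both use the proportionality of the gradients $\nabla_{x_1}\theta$ and $\nabla_{x_j}\theta$ guaranteed by $\mathcal{G}(x_1)=\mathcal{G}(x_j)$, take $\lambda_j$ to be the square of the proportionality constant so that the rank-one quadratic terms of $s_{x_1}$ and $\lambda_j s_{x_j}$ at the origin cancel, and conclude order $\geq 4$ from the evenness of the sections. You simply spell out the parity and Taylor-expansion details that the paper leaves implicit.
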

\begin{proof}  By hypothesis, for every $j$ there exists a complex number $\gamma_j$  such that
$$\frac{\partial\theta}{\partial z_i}(x_1)=\gamma_j\frac{\partial\theta}{\partial z_i}(x_j )$$
for every $i$. Hence setting $\lambda_j=\gamma_j^2$ we produce sections of $\Gamma_{00}$

\end{proof}

We have the following corollary of the previous lemma.
\begin{corollary}
 Let $x_1,\ldots,x_r\in\Theta^{sm}$ be points such that $\Km(x_1),\ldots,\Km(x_r)$ are different but  $\mathcal{G}(x_1)=\ldots=\mathcal{G}(x_r)$; then, the  projective space  generated by these points in $\mathbb{P}^{2^g-1}$ has dimension at  most $2^g-g(g+1)/2-1$
\end{corollary}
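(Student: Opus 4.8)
The plan is to deduce the Corollary directly from the Lemma that immediately precedes it, together with the classical dimension count for $\Gamma_{00}$. Concretely, I would argue as follows.

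First, observe that the Lemma produces, for each $j=2,\dots,r$, a constant $\lambda_j$ such that $t_j := s_{x_1} - \lambda_j s_{x_j}$ lies in $\Gamma_{00}$. Now consider the linear span $V \subseteq H^0(A,2\Theta)$ of the sections $s_{x_1},\dots,s_{x_r}$; its projectivization is exactly the projective space generated by $\Km(x_1),\dots,\Km(x_r)$ in $\mathbb{P}^{2^g-1}$, so it suffices to bound $\dim V$. The key point is that $V$ is spanned by the single section $s_{x_1}$ together with the $r-1$ sections $t_2,\dots,t_r$, because $s_{x_j} = \lambda_j^{-1}(s_{x_1} - t_j)$ whenever $\lambda_j \neq 0$ (and if some $\lambda_j = 0$ then $s_{x_j} = t_j \in \Gamma_{00}$ already lies in the span of the $t$'s, so the same conclusion holds, in fact with one fewer generator). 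Hence $V \subseteq \mathbb{C}\cdot s_{x_1} + \Gamma_{00}$, and therefore
\[
\dim V \le 1 + \dim \Gamma_{00} = 1 + \left(2^g - \tfrac{g(g+1)}{2} - 1\right) = 2^g - \tfrac{g(g+1)}{2}.
\]
Passing to projective dimension, the span of $\Km(x_1),\dots,\Km(x_r)$ has dimension at most $2^g - g(g+1)/2 - 1$, which is the claimed bound. Here I am using the stated fact (from \cite{GG}) that $\dim \Gamma_{00} = 2^g - g(g+1)/2 - 1$ for indecomposable $(A,\Theta)$; strictly one should note that this bound is what is meant, i.e.\ implicitly $(A,\Theta)$ is assumed indecomposable as elsewhere in the paper.

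There is essentially no obstacle here: the content is entirely in the preceding Lemma, and the Corollary is a bookkeeping consequence. The only subtlety worth a sentence of care is the degenerate case $\lambda_j = 0$, i.e.\ $x_j \in \mathrm{Sing}(\Theta)$ would force $s_{x_j} \in \Gamma_{00}$ — but since we assumed $x_j \in \Theta^{sm}$, the number $\gamma_j$ in the Lemma's proof is a ratio of a nonzero gradient component of $\theta$ at $x_1$ to the corresponding one at $x_j$, hence nonzero, so in fact $\lambda_j = \gamma_j^2 \neq 0$ always and the clean computation $s_{x_j} = \lambda_j^{-1}(s_{x_1}-t_j)$ applies verbatim. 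Thus $V = \mathbb{C}\cdot s_{x_1} + \Span(t_2,\dots,t_r) \subseteq \mathbb{C}\cdot s_{x_1} + \Gamma_{00}$, and the dimension estimate follows.
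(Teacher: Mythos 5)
Your argument is correct and is essentially the paper's intended one: the paper states this as an immediate corollary of the preceding lemma, the content being exactly your observation that (via the addition formula identifying $\Km(x_j)$ with $[s_{x_j}]$) the span of the sections lies in $\mathbb{C}\cdot s_{x_1}+\Gamma_{00}$, whose projectivization has dimension $2^g-g(g+1)/2-1$. Only a small slip: in your parenthetical, $\lambda_j=0$ would give $t_j=s_{x_1}\in\Gamma_{00}$, not $s_{x_j}=t_j$, but this is moot since, as you then correctly note, $x_1\in\Theta^{sm}$ forces $\gamma_j\neq 0$ and hence $\lambda_j\neq 0$.
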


Thus for any point $p\in \mathbb{P}^{g-1}$ in the image of the Gauss map, it makes sense to consider the space  $V_p$ spanned by the sections described above and ask if it is the full $\Gamma_{00}$.

In the Jacobian case we can do something more. To start with, we have the following lemma.

\begin{lemma}
 Let $x_1,x_2,x_3\in\Theta$, not all singular points , thus  they determine a trisecant if and only if 
$${\rm dim}(\Gamma_{00}\cap\Span\{s_{x_1}(z), s_{x_2}(z), s_{x_3}(z)\})=1$$
\end{lemma}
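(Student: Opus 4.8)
The statement is a biconditional relating a collinearity condition on the Kummer variety to a dimension condition on an intersection with $\Gamma_{00}$. I would treat the two implications separately but with a shared computational core, namely the Addition Formula expansion $s_x(z) = \theta(z-x)\theta(z+x)$ together with the observation (already used in the proof of Proposition \ref{p1}) that the Taylor expansion of $s_x$ at the origin begins with $\theta(x)^2$ in degree $0$, with $-2\,\theta(x)\,\nabla_x\theta$ paired against $z$ in degree $1$, and with a degree-$2$ term built from $\nabla_x\theta \otimes \nabla_x\theta$ and the Hessian of $\theta$ at $x$. Thus $s_x \in \Gamma_{00}$ forces $\theta(x)=0$ (i.e. $x\in\Theta$), $\nabla_x\theta=0$ (i.e. $x\in\mathrm{Sing}(\Theta)$), and a vanishing of the quadratic term; and more to the point, a linear combination $\sum c_i s_{x_i}$ lies in $\Gamma_{00}$ precisely when the degree-$0$, degree-$1$ and degree-$2$ parts all cancel. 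For points $x_i\in\Theta$ the degree-$0$ part already vanishes, the degree-$1$ part vanishes automatically, and the vanishing of the degree-$2$ part is exactly the condition $\sum_i c_i\,(\nabla_{x_i}\theta)(\nabla_{x_i}\theta)^t = 0$ (the Hessian contributions being proportional to the same rank-one matrices when the $x_i$ are smooth on $\Theta$, by the identity $\partial^2 s_x/\partial z_i\partial z_j(0) = 2\,\partial_i\theta(x)\,\partial_j\theta(x)$ quoted in the excerpt).

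\textbf{Forward direction.} Assume $x_1,x_2,x_3\in\Theta$, not all singular, lie on a trisecant of $K(A)$. By Corollary \ref{c1}(1) we may as well think of all three as being on $\Theta$, and the trisecant relation among the $\Km(x_i)$ translates, via the Addition Formula exactly as in the proof of Proposition \ref{p1}, into a relation $s_{x_1} = \beta_2 s_{x_2} + \beta_3 s_{x_3}$ with (after possibly relabelling) all $\beta_i\neq 0$, since a genuine trisecant means no two of the three Kummer images coincide with a point forcing degeneracy — here one should invoke that the points are distinct on the Kummer variety so that no sub-collinearity collapses the relation. This single relation gives one element of $\Span\{s_{x_1},s_{x_2},s_{x_3}\}$, namely $s_{x_1}-\beta_2 s_{x_2}-\beta_3 s_{x_3}=0$, which is the zero section — that is not what we want. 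Instead, the correct object: differentiate. The relation $s_{x_1}=\beta_2 s_{x_2}+\beta_3 s_{x_3}$ holds identically in $z$, so taking second derivatives at $z=0$ and using the rank-one identity gives $(\nabla_{x_1}\theta)(\nabla_{x_1}\theta)^t = \beta_2(\nabla_{x_2}\theta)(\nabla_{x_2}\theta)^t + \beta_3(\nabla_{x_3}\theta)(\nabla_{x_3}\theta)^t$; by the trisecant conjecture (Theorem \ref{thm:trisecant_conj}) and Proposition \ref{centralprop} the three gradients are all proportional (the Gauss map is constant on the smooth ones), so there is a genuine linear dependence among $s_{x_1},s_{x_2},s_{x_3}$ whose quadratic term at $0$ vanishes, producing a nonzero element of $\Gamma_{00}\cap\Span\{s_{x_1},s_{x_2},s_{x_3}\}$. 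One then checks the intersection is exactly one-dimensional: if it were two-dimensional, two independent $\Gamma_{00}$-relations would force, by the degree-$2$ cancellation, two independent rank-one vanishing conditions, which combined with the spanning of a $3$-dimensional space of sections over a Gauss fiber would contradict the ``highest multiplicity'' count of Theorem \ref{thm:proof} (equivalently, would force two of the $\Km(x_i)$ to coincide, contradicting that they form a genuine trisecant).

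\textbf{Reverse direction.} Conversely, suppose $\dim(\Gamma_{00}\cap\Span\{s_{x_1},s_{x_2},s_{x_3}\})=1$ with the $x_i\in\Theta$ not all singular. Pick $0\neq\sigma = \sum c_i s_{x_i}\in\Gamma_{00}$. Its vanishing to order $\geq 4$ at the origin kills the degree-$\leq 3$ Taylor coefficients; the degree-$0$ and degree-$1$ parts vanish anyway since $x_i\in\Theta$, but the degree-$2$ part gives $\sum_i c_i\,(\nabla_{x_i}\theta)(\nabla_{x_i}\theta)^t=0$ after absorbing the Hessian terms using smoothness (for a singular $x_i$ one has $\nabla_{x_i}\theta=0$ and $s_{x_i}\in\Gamma_{00}$ already, so such a term contributes nothing and can be set aside). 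Not all $c_i$ can vanish and not all $x_i$ are singular, so this is a nontrivial relation $\sum a_i v_i\otimes v_i=0$ among gradients $v_i=\nabla_{x_i}\theta$. By Igusa's Lemma (the lemma quoted in the excerpt) with $r=3$ — more precisely by its content for $r$ small — the nonzero $v_i$ span a line, so the gradients are all proportional, i.e. $\mathcal{G}(x_i)$ is constant on the smooth points. It remains to produce the actual trisecant. Here I would run the argument of Theorem \ref{thm:exist} in reverse: constancy of the Gauss map plus the fact that $x_i\in\Theta$ (so each $x_i+\kappa$ is effective of degree $g-1$) forces, via Lemma \ref{lem:trick} applied to the differences $x_i-x_j$, the common Gauss image to have the shape $p+q+r+s+2D$ in $\mathcal{B}_3$ and the $x_i$ to be exactly the Fay points $a,b,c$ of Theorem \ref{thm:exist}; then Fay's formula (Theorem \ref{thm:Faytrisecant}) delivers the trisecant.

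\textbf{Main obstacle.} The delicate point is the bookkeeping of the Hessian terms: the clean identity $\partial^2 s_x/\partial z_i \partial z_j(0) = 2\,\partial_i\theta(x)\partial_j\theta(x)$ is stated in the excerpt only for $x\in\Theta$, so the reduction of ``$\sum c_i s_{x_i}\in\Gamma_{00}$'' to a pure rank-one relation among gradients relies on all the relevant $x_i$ lying on $\Theta$ and being smooth there; making sure the singular $x_i$ are harmless, and that the ``not all singular'' hypothesis is genuinely enough to keep the relation nontrivial, is where I would be most careful. A secondary subtlety is proving the intersection is \emph{exactly} one-dimensional rather than at least one-dimensional in the forward direction — this is really the input ``genuine trisecant $\Rightarrow$ the three Kummer points are distinct and no two-term sub-relation holds'', which must be used to rule out a second independent $\Gamma_{00}$-relation.
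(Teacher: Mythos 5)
The paper states this lemma without proof, so you are being judged against the intended argument rather than a written one. Your computational engine is the right one: for $x_i\in\Theta$ each $s_{x_i}$ vanishes to order $2$ at the origin, its quadratic term is (up to a constant) $(\nabla_{x_i}\theta)(\nabla_{x_i}\theta)^t$, and all odd-order terms vanish because $s_x$ is even, so $\sum_i c_i s_{x_i}\in\Gamma_{00}$ exactly when $\sum_i c_i(\nabla_{x_i}\theta)(\nabla_{x_i}\theta)^t=0$ (you never invoke evenness, which you do need in the forward direction to promote ``quadratic term zero'' to ``order $\geq 4$''). In the forward direction your construction of the nonzero element is muddled: the collinearity relation $s_{x_1}=\beta_2 s_{x_2}+\beta_3 s_{x_3}$ is the zero section and ``differentiating it'' produces nothing; the element is a \emph{different} combination, e.g.\ $s_{x_1}-\gamma^2 s_{x_2}$ where $\nabla_{x_1}\theta=\gamma\,\nabla_{x_2}\theta$ by Corollary \ref{c1}(3) (note Corollary \ref{c1}(2) guarantees at least two of the points are smooth), nonzero because $\Km(x_1)\neq\Km(x_2)$ on a genuine trisecant -- this is exactly the paper's preceding lemma. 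Exactness is also much simpler than your appeal to multiplicities in Theorem \ref{thm:proof}: on a trisecant $\Span\{s_{x_1},s_{x_2},s_{x_3}\}$ is two-dimensional, so if the intersection with $\Gamma_{00}$ were two-dimensional every $s_{x_i}$ would lie in $\Gamma_{00}$, forcing all three points to be singular, against the hypothesis.

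The genuine gap is the reverse direction. From $\sum c_i(\nabla_{x_i}\theta)(\nabla_{x_i}\theta)^t=0$ and Igusa's lemma you only get proportionality of those gradients with $c_i\neq0$, i.e.\ at best that some of the $x_i$ lie in a common fibre of $\G$; your next step -- that constancy of the Gauss map forces, via Lemma \ref{lem:trick}, the common image into $\mathcal{B}_3$ and the $x_i$ to be the Fay points -- is not available. Lemma \ref{lem:trick} needs the input $x_i-x_j\equiv p_i-p_j$ for points $p_i,p_j$ on the curve, and in Proposition \ref{centralprop} that input comes precisely from the trisecant via Theorem \ref{thm:trisecant_conj}; it is not a consequence of lying in one Gauss fibre, and it cannot be: a general fibre of $\G$ has $\binom{2g-2}{g-1}$ points and, by Theorem \ref{thm:proof}, contains no trisecant triple, although $\G$ is constant on it. Worse, the dimension-one hypothesis alone does not even force collinearity: take $x_1,x_2\in\Theta^{sm}$ with $\G(x_1)=\G(x_2)$, $\Km(x_1)\neq\Km(x_2)$, and $x_3\in\Theta^{sm}$ generic; then $s_{x_1},s_{x_2},s_{x_3}$ are linearly independent, $\Gamma_{00}\cap\Span\{s_{x_1},s_{x_2},s_{x_3}\}$ is spanned by $s_{x_1}-\gamma^2 s_{x_2}$ and is one-dimensional, yet the three Kummer images are not collinear. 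So the converse needs an extra hypothesis (for instance that the three sections are linearly dependent, or that all three $\G(x_i)$ coincide, which is evidently the situation the authors have in mind); your proposed route does not close this, and no argument deducing the Fay configuration from Gauss-constancy alone can.
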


In general let $K_0=\sum_{i=1}^{2g-2}P_i$ be an effective canonical divisor; thus any effective divisor $D$  of degree $g-1$ whose support is contained  in $K_0$ produces a point $x_D\in\Theta$, and we have $$s_{x_D}(z)\in \Gamma_{00}\iff  l(D)\geq 2  $$
For the l $D$s  such  that  $l(D)=1$, we fix one  $D_1$ and then we have that  
$$t_D(z)=s_{x_{D_1}}(z)-\lambda_Ds_{x_D}(z)\in \Gamma_{00} $$
 
 We conclude that in the Jacobian case we can enlarge $V_p$ as follows:  for any point $p=\mathcal G(K_0)\in \mathbb{P}^{g-1}$ we set   
 $$W_p=\Span\{\dots t_{x_D}(z)\dots, s_{x_E}(z)\dots \}$$
 with $D, E$ effective  of degree $g-1$,  whose supports are   contained  in $K_0$ and  $l(D)=1,\, l(E)\geq 2$ Of course, $V_p\subset W_p$

\begin{que}  With the above notations, for which $p\in \mathbb{P}^{g-1}$ do we have either $\Gamma_{00}=  V_p$ or $\Gamma_{00}=  W_p$ ? 
\end{que}

We observe that in  some special cases both inclusions fail. The simplest case is already in genus 3: we can consider a smooth plane quartic with a point $P_0$ such that $K_0= 4P_0$, then $\mathcal G^{-1}(K_0)$ is a single point with multiplicity six.

We conclude  with a last remark. We shall write $\Theta_x$ for $\Theta+x$. For fixed $p\in\mathbb P^{g-1}$, let us consider in $|2\Theta|$ the space $Z_p$ generated by the divisors
$$ \Theta_{x} \cup  \Theta_{-x}$$  with $x$ in $\Theta^{sing}$ or in $\mathcal G^{-1}(p)$. 
Since $x$ is in $\Theta$, then $0$ is in $\Theta_{x} \cup  \Theta_{-x}$, so in the base locus of this linear system. If it is an isolated point of the base locus, then $(A, \Theta)$  is  not a Jacobian. Hence, we can paraphrase \cite[Conjecture 1]{BD} as follows

\begin{con} If $(A, \Theta)$ is not a Jacobian, for some $p\in\mathbb P^{g-1}$ the base locus of the linear system $Z_p$ is zero dimensional in a neighborhood of the origin.
\end{con}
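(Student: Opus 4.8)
The plan is to reduce the conjecture to two well-studied and still open statements — the $\Gamma_{00}$ conjecture of van Geemen and van der Geer \cite{GG}, and an affirmative answer to the Question stated just above — by means of a structural description of $Z_p$ near the origin. For $x\in\Theta^{sm}$ the section $s_x(z)=\theta(z-x)\theta(z+x)$ vanishes at $0$ to order exactly two; since $\theta$ is even, its degree-one and degree-three Taylor terms at $0$ vanish identically, and its degree-two term equals $-(\nabla_x\theta\cdot z)^2$. As $\nabla_x\theta$ represents $\mathcal{G}(x)$, for all $x$ in one fibre $\mathcal{G}^{-1}(p)$ these quadratic parts are proportional to a single square $(\ell_p\cdot z)^2$. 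Fixing $x_1\in\mathcal{G}^{-1}(p)$, the difference $s_x-c_xs_{x_1}$, with $c_x$ chosen to cancel this quadratic term, therefore vanishes to order at least four, i.e. lies in $\Gamma_{00}$; since $s_x\in\Gamma_{00}$ already when $x\in\Theta^{sing}$, we obtain
\[
Z_p\ \subseteq\ \mathbb{C}\,s_{x_1}+\Gamma_{00},\qquad\text{and in fact}\qquad Z_p=\mathbb{C}\,s_{x_1}\oplus\bigl(Z_p\cap\Gamma_{00}\bigr),
\]
where $Z_p\cap\Gamma_{00}$ is the span of the $s_x$ with $x\in\Theta^{sing}$ together with the differences $s_x-c_xs_{x_1}$ with $x\in\mathcal{G}^{-1}(p)$ — this is exactly $W_p$ in the Jacobian case.

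Granting this, the direction already asserted in the text is immediate. If $(A,\Theta)=J(C)$, then $\mathrm{Bs}(\Gamma_{00})\supseteq C-C$ by \cite{GG}, so
\[
\mathrm{Bs}(Z_p)\ \supseteq\ \mathrm{Bs}\bigl(\mathbb{C}\,s_{x_1}+\Gamma_{00}\bigr)\ =\ (\Theta_{x_1}\cup\Theta_{-x_1})\cap\mathrm{Bs}(\Gamma_{00})\ \supseteq\ (\Theta_{x_1}\cup\Theta_{-x_1})\cap(C-C),
\]
which is the intersection of an effective divisor with the surface $C-C$, hence at least one-dimensional, and it passes through the origin because $x_1\in\Theta$; (if $\mathcal{G}^{-1}(p)=\varnothing$ one gets $\mathrm{Bs}(Z_p)\supseteq\mathrm{Bs}(\Gamma_{00})\supseteq C-C$ directly). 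Thus on a Jacobian the origin is never an isolated point of $\mathrm{Bs}(Z_p)$.

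For the converse, assume $(A,\Theta)$ is not a Jacobian. I would first invoke the $\Gamma_{00}$ conjecture, which predicts that $\mathrm{Bs}(\Gamma_{00})$ is zero-dimensional in a neighbourhood of the origin. By the inclusion above, the conjecture then follows as soon as one can choose $p$ so that $Z_p$ is large enough that $\mathrm{Bs}(Z_p)$ does not acquire a positive-dimensional component through $0$; this certainly holds once $Z_p\supseteq\Gamma_{00}$, equivalently once the sections $s_x$ ($x\in\Theta^{sing}$) together with the differences $s_x-c_xs_{x_1}$ ($x\in\mathcal{G}^{-1}(p)$) span $\Gamma_{00}$ — the non-Jacobian analogue of the Question. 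A natural line of attack is to exploit that a generic ppav of dimension $g\ge 4$ is not a Jacobian, has smooth $\Theta$, and has a finite Gauss map whose degree $g!$ exceeds the number $\binom{2g-2}{g-1}$ of analogous sections occurring on a Jacobian: one would try to show, by a dimension count combined with the irreducibility and monodromy of the Gauss covering, that the differences $s_x-c_xs_{x_1}$ over a generic fibre already span $\Gamma_{00}$, and then propagate this to all non-Jacobians by semicontinuity, using the extra generators $s_x$ ($x\in\Theta^{sing}$) on the Andreotti–Mayer loci where $\Theta$ is singular.

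The main obstacle is that both ingredients are themselves open. The $\Gamma_{00}$ conjecture is known only in low genus and under partial hypotheses, and proving $Z_p\supseteq\Gamma_{00}$ for a suitable $p$ is delicate precisely because it cannot hold for every $p$: the example recalled above of a smooth plane quartic with $K_0=4P_0$, whose Gauss fibre is a single point of multiplicity six, shows that $W_p$ can be far from all of $\Gamma_{00}$, so the choice of $p$ must be made with care and cannot be uniform even in fixed genus. A more self-contained but equally difficult route would be to argue by contraposition: if $\mathrm{Bs}(Z_p)$ contained a curve through the origin for every $p$, one would try to extract from these curves a (possibly formal) trisecant of the Kummer variety and then apply Theorem~\ref{thm:trisecant_conj} to conclude that $(A,\Theta)$ is a Jacobian; the difficulty there lies in controlling how the base curves vary with $p$.
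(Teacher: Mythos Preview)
The statement is labelled a \emph{Conjecture} in the paper and is presented explicitly as a paraphrase of \cite[Conjecture~1]{BD}; the paper offers no proof, only the remark preceding it that if the origin \emph{is} isolated in $\mathrm{Bs}(Z_p)$ then $(A,\Theta)$ cannot be a Jacobian. There is therefore no proof in the paper to compare your attempt against.

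Your write-up is not a proof either, and you say so yourself: you reduce the conjecture to the $\Gamma_{00}$ conjecture of \cite{GG} together with an affirmative answer (for some $p$) to the spanning Question posed just above, and then note that both ingredients are open. As a strategy this is coherent, and your structural description $Z_p\subseteq\mathbb{C}\,s_{x_1}+\Gamma_{00}$, with the Jacobian direction deduced from $C-C\subseteq\mathrm{Bs}(\Gamma_{00})$, is correct and in fact makes precise the one-line claim the paper makes before stating the conjecture. One small inaccuracy: you identify $Z_p\cap\Gamma_{00}$ with the paper's $W_p$ in the Jacobian case, but $Z_p$ is generated using \emph{all} $x\in\Theta^{sing}$, whereas $W_p$ only uses those $x_E$ with $E$ supported in $K_0$; so $W_p\subseteq Z_p\cap\Gamma_{00}$, with equality not asserted. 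This does not affect your argument, since the inclusion is in the direction you need.

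In short: there is no gap to name because you do not claim to have a proof; what you have written is an honest outline of why the conjecture is equivalent, modulo the $\Gamma_{00}$ conjecture, to a spanning statement of the type the paper's Question asks about, and that is as far as the paper itself goes.
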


\end{section}

\end{document}